\theoremstyle{definition}
\newtheorem{definition}{Definition}[section]
\theoremstyle{plain}
\newtheorem{theorem}{Theorem}
\newtheorem{lemma}{Lemma}
\newcommand\smin[3] 
 \newcommand\snew[3] 
 \newcommand\sold[3] 
 \newcommand\sfull[3] 
 \newcommand\sexo[2]{\mathcal{S}_1^{\text{exotic}}(#1,#2)}
 \newcommand\sdih[2]{\mathcal{S}_1^{\text{dihedral}}(#1,#2)}
\newcommand\ebasis[1]{\mathcal{E}_1^\text{basis}(N,#1)}
\DeclareMathOperator{\sl2}{SL_2}
\DeclareMathOperator{\Ord}{Ord}
\DeclareMathOperator{\lcm}{LCM}
\DeclareMathOperator{\gal}{Gal}
\title{Computation of weight 1 modular forms with exotic representations}
\author{Kieran Child \\ University of Bristol}
\email{kieran.child@bristol.ac.uk}
\address{University of Bristol \\ Beacon House \\ Queens Road \\ Bristol \\ BS8 1QU \\ UK}
\begin{document}

\begin{abstract}
We present a deterministic algorithm for computing spaces of weight 1 modular forms with exotic representations. This algorithm is an improved version of Schaeffer's Hecke stability method, utilising the author's previous work on the twist-minimal trace formula for weight 2 holomorphic forms, and presenting a method of lifting forms from characteristic $p$.

The algorithm was used to compute all such forms with level at most 10,000. Together with Sutherland's computation of dihedral forms, this allows us to present the dimensions of all weight 1 newform spaces up to level 10,000.
\end{abstract}

\maketitle

\section{Introduction}
The Langlands program conjectures associations between cuspidal automorphic forms of $\text{GL}_n$ and irreducible, $n$-dimensional representations of the absolute Galois group $\gal(\overline{\mathbb{Q}}/\mathbb{Q})$. Following significant results in \cite{DeligneSerre} and \cite{KhareWintenberger}, this association has been proven for the 2-dimensional, odd case. We have a bijection:
\begin{equation}
\left\{ \begin{matrix}\text{Weight 1 newforms of}\\ \text{level $N$ and character $\chi$}\end{matrix}\right\} \longleftrightarrow \left\{ \begin{matrix}\text{Odd, irreducible, 2-dimensional} \\ \text{representations of $\gal(\overline{\mathbb{Q}}/\mathbb{Q})$ with} \\ \text{conductor $N$ and determinant character $\chi$}\end{matrix}\right\}.
\end{equation}
Hecke eigenvalues of the newform are equal to traces of Frobenius elements in the representation. Despite this bijection having been established, concrete examples are difficult to construct. This is especially so when the projective image of the representation is not dihedral.

Typical approaches to computing Fourier expansions of arbitrary newforms, such as by modular symbols (see \cite{Cremona}) or trace formulae (see \cite{Mypaper}) require that the weight be at least 2. Dimension formulae, such as in \cite{CohenOesterle}, also have this restriction. Instead, a number of specialised methods for computing weight 1 forms have been proposed. See \cite[Section 13.6.1]{CohenStromberg} for a review of three such methods. The most efficient in general is the Hecke stability method, presented in \cite{Schaeffer}.

While the computation of weight 1 newforms of a given level and character is slow, there are two related spaces which can be computed much quicker. Firstly, weight 1 newforms associated with representations with dihedral projective image can be computed via class field theory. Secondly, weight 1 newforms over characteristic $p$ can be computed quickly, by the algorithms in \cite{Edixhoven} or the $\mathbb{F}_p$ version of the Hecke stability method. These give lower and upper bounds respectively on the dimension of a weight 1 newform space of given level and character. A slower, deterministic computation of the space is then only required if these bounds are not equal. This approach was taken to compute all weight 1 forms up to level 1,500 as described in \cite{BuzzardComputation}, with the resulting data available at \cite{LauderTables}, and has since been performed up to level 4,000 for the LMFDB (see \cite{LMFDBdata}). From private discussion with Bill Allombert, we are aware that such a computation up to level 5,000 has also been performed.

This paper presents an improved version of the Hecke stability method to compute all weight 1 newforms up to a given level. Improvements come from utilising the twist-minimal trace formula presented in \cite{Mypaper} to compute weight 2 forms, and the introduction of a method of lifting and verifying forms from characteristic $p$, thus reducing the number of characteristic 0 calculations which need to be performed. The outcome of these improvements is the computation of all weight 1 newforms up to level 10,000, with this data available at \cite{wt1data}.

The structure of this paper is as follows. Section \ref{sec:computation} covers the preliminary theory used in our work, and presents our results. Section \ref{sec:proof} constitutes a proof of Theorem \ref{thm:MainTheorem}. Effectively, this is a proof that the proposed algorithm does indeed produce a basis of any specified weight 1 newform space. Finally, Section \ref{sec:commentary} discusses the practical computation, proving Theorem \ref{thm:data}, that our data up to level 10,000 is correct.

\subsection*{Acknowledgements}
This research is supported by EPSRC DTP EP/R513179/1 funding. I would like to thank Min Lee, Andrew Booker and Jonathan Bober for their guidance and suggestions with this work. I would also like to thank Andrew Sutherland for providing the dihedral data.

\section{Computation of weight 1 forms}
\label{sec:computation}
\subsection{Preliminary theory}
For any $N \in \mathbb{N}$, we define the congruence subgroup $\Gamma_0(N) \subset \sl2(\mathbb{Z})$ by the following condition:
\begin{equation}
\Gamma_0(N)=\left\{ \begin{pmatrix} a & b \\ c & d \end{pmatrix} \in \sl2(\mathbb{Z}): N\mid c \right\}.
\end{equation}

Let $\chi$ be a level $N$ Dirichlet character. That is, a homomorphism:
\begin{equation}
\chi : \left(\mathbb{Z}/N\mathbb{Z}\right)^* \rightarrow \mathbb{C},
\end{equation}
where the domain is extended to all $\mathbb{Z}$ by setting $\chi(n)=0$ whenever $(n,N)>1$. The conductor of $\chi$ is denoted $\mathfrak{f}(\chi)$. We define $\chi(\gamma)$ for $\gamma = \begin{pmatrix} a & b \\ c & d \end{pmatrix} \in \Gamma_0(N)$ by $\chi(\gamma):=\chi(d)=\overline{\chi(a)}$.

The group $\sl2(\mathbb{R})$ acts on the Poincar\'e upper half plane $\mathbb{H}=\{z \in \mathbb{C}:\Im(z)>0\}$ by M\"obius transformations:
\begin{equation}
\gamma z = \frac{az+c}{cz+d},\;\;\;\;\;\;\; \forall \; \gamma = \begin{pmatrix} a & b \\ c & d \end{pmatrix} \in \sl2(\mathbb{R}).
\end{equation}
Fixing $k \in \mathbb{N}$, we define the weight $k$ slash action of $\gamma = \begin{pmatrix} a & b \\ c & d \end{pmatrix} \in \sl2(\mathbb{R})$ on any complex-valued function $f$ as:
\begin{equation}
f|_k \gamma(z)=(cz+d)^{-k}f(\gamma z).
\end{equation}
If there exists a level $N$ Dirichlet character $\chi$ such that for all $\gamma \in \Gamma_0(N)$ we have:
\begin{equation}
f|_k\gamma(z)=\chi(\gamma)f(z),
\end{equation}
then we say that $f$ is weakly modular of weight $k$, level $N$ and character $\chi$. 

The set $\mathbb{Q} \cup \{i\infty\}$ is called the \emph{cusps} of $\sl2(\mathbb{Z})$. It is the $\sl2(\mathbb{Z})$-orbit of $i\infty$ under M\"obius transformations. If a function $f(z)$ is polynomially bounded as $z$ tends to any cusp, then we say $f$ is `holomorphic at the cusps.' This is equivalent to requiring $f|_k\gamma(z)$ be polynomially bounded as $\Im(z)\rightarrow \infty$ for all $\gamma \in \sl2(\mathbb{Z})$.

Fix $k, N \in \mathbb{N}$ and let $\chi$ be a level $N$ Dirichlet character. A weight $k$, level $N$, character $\chi$ modular form is a complex-valued function $f$ which has the following properties:
\begin{enumerate}
\item $f$ is holomorphic on $\mathbb{H}$.
\item $f$ is holomorphic at the cusps.
\item $f$ is weakly modular with weight $k$, level $N$ and character $\chi$.
\end{enumerate}

One can immediately construct examples of modular forms, called Eisenstein series, as follows. Fix a weight $k$, and let $\chi_1$ and $\chi_2$ be Dirichlet characters level $N_1$ and $N_2$ respectively such that $\chi_1\overline{\chi_2}(-1)=(-1)^k$. For $\Re(s)>1$ we define the series:
\begin{equation}
E_{\chi_1}^{\chi_2}(z;s) = \sum_{(c,d)=1} \frac{\Im(N_2z)^s \chi_1(c)\chi_2(d)}{|cN_2z+d|^{2s-k}(cN_2z+d)^{k}}
\end{equation}
If the weight $k$ is at least 3 then $E_{\chi_1}^{\chi_2}(z;k/2)$ is a level $N_1N_2$ modular form with character $\chi_1\overline{\chi_2}$. For weights 1 and 2 we obtain Eisenstein series by analytic continuation, and in all cases the resulting modular form is written $E_{\chi_1}^{\chi_2}(z)$. See \cite{Young} for details, including the derivation of explicit expressions for Fourier coefficients. For any given Eisenstein series $f$, there exists some cusp $c$ such that $f$ is not vanishing as $z \rightarrow c$.

In contrast, if $f$ is a modular form such that $f(z) \rightarrow 0$ as $z$ approaches any cusp then we say that $f$ is a cusp form. All modular forms (respectively cusp forms) of a given weight, level and character constitute a vector space which we denote $\mathcal{M}_k(N,\chi)$ (respectively $\sfull{k}{N}{\chi}$). The subspace of modular forms spanned by Eisenstein series is denoted $\mathcal{E}_k(N,\chi)$, and any given modular form space $\mathcal{M}_k(N,\chi)$ decomposes as $\sfull{k}{N}{\chi} \oplus \mathcal{E}_k(N,\chi)$. Given two modular forms $f$ and $g$, with at least one a cusp form, the Petersson inner product is defined as:
\begin{equation}
\langle f, g \rangle_N = \int_{z=x+iy \in \Gamma_0(N)\backslash \mathbb{H}} f(z)\overline{g(z)}y^{k}\frac{dxdy}{y^2}.
\end{equation}

With this inner product, the cusp form space of a given weight, level and character is a Hilbert space and is orthogonal to the Eisenstein space. From the definition of modularity, we see that if $f(z) \in \sfull{k}{N}{\chi}$ then $f(bz)\in\sfull{k}{M}{\chi}$ whenever $bN \mid M$. The subspace of $\sfull{k}{N}{\chi}$ spanned by forms arising in this manner from levels less than $N$ is called the oldform space and is denoted $\sold{k}{N}{\chi}$. The orthogonal complement of $\sold{k}{N}{\chi}$ with respect to the Petersson inner product is called the newform space and is denoted $\snew{k}{N}{\chi}$. Consequently, any cusp form space further decomposes into lifts from newform spaces. This decomposition was presented and studied in \cite{AtkinLehner}.

Modular forms are completely described by their Fourier expansions:
\begin{equation}
f(z)=\sum_{n \ge 0}a_n e^{2\pi i n z}.
\end{equation}
Note that $f \in \sfull{k}{N}{\chi}$ implies that $a_0=0$. Let $f_1$ and $f_2$ be modular forms of the same weight, level and character. The Sturm bound is a non-negative integer $B$ such that if $f_1 \neq f_2$ then the truncated Fourier expansions of $f_1$ and $f_2$ up to the $B$-th coefficient must also differ. The Sturm bound is given by:
\begin{equation}
B(\mathcal{M}_k(N,\chi))=\left \lfloor\frac{mk}{12} \right \rfloor,
\end{equation}
where $m$ is the index:
\begin{equation}
m = \left[\sl2(\mathbb{Z}):\Gamma_0(N)\right] = N \prod_{p\mid N}\left(1+\frac{1}{p}\right).
\end{equation}
This bound holds regardless of character (see \cite[Corollary 9.20]{Stein} for the weight 2 case and \cite[Lemma 5]{BuzzardComputation} for the weight 1 case). As such, computation of Fourier coefficients of a modular form up to this bound identifies the form, and a matrix of coefficients of basis elements, truncated at the Sturm bound, will have full rank.

In \cite{Young} we find the following construction of basis elements of the Eisenstein space, along with the Fourier expansions of these elements.
\begin{definition}
\label{eisbasisdef}
Fix a level $N$ and Dirichlet character $\chi$. Let $\ebasis{\chi}$ be the set of Eisenstein series $E_{\chi_1}^{\chi_2}(z)$ of weight 1 satisfying the criteria:
\begin{itemize}
\item $\chi_2(-1)=1$,
\item $\mathfrak{f}(\chi_1)\mathfrak{f}(\chi_2)\mid N$,
\item $\left(\mathfrak{f}(\chi_1),\frac{N}{\mathfrak{f}(\chi_1)}\right) \mid \frac{N}{\mathfrak{f}(\chi)}$,
\item $\chi_1 \overline{\chi_2} = \chi$.
\end{itemize}
\end{definition}
The set of $E_{\chi_1}^{\chi_2}(bz)$ for any $E_{\chi_1}^{\chi_2}(z) \in \ebasis{\chi}$ and $b \mid \frac{N}{\mathfrak{f}(\chi_1)\mathfrak{f}(\chi_2)}$ is a basis for $\mathcal{E}_1(N,\chi)$. Let $E_{\chi_1}^{\chi_2}(z)= \sum_{n \ge 0}a_n e^{2\pi i nz}$ be any such Eisenstein series, then $a_n$ is given by:
\begin{equation}
\label{eisensteinFourier}
a_n = \begin{cases}
\sum_{d \mid n}\chi_1(d)\chi_2(n/d) & \text{if }n>0, \\
\frac{-1}{2\mathfrak{f}(\chi)}\sum_{r<\mathfrak{f}(\chi)}r\chi(r) & \text{if }n=0 \text{ and }\chi_2=\mathbbm{1},\\
0 & \text{else.}
\end{cases}
\end{equation}

For certain newforms, the Fourier coefficients are completely determined by the eigenvalues of Hecke operators, which we now define as in \cite{IwaniecTopics}. Let $f$ be a weight $k$ modular form, and fix $n \in \mathbb{N}$ and a Dirichlet character $\chi$. The $n$-th Hecke operator $T_n^\chi(f)$ is given by:
\begin{equation}
T_n^\chi(f) = \frac{1}{n}\sum_{ad=n}\chi(a)a^k \sum_{b\text{ mod }d}f\left( \frac{az + b}{d}\right).
\end{equation}

All Hecke operators stabilise the spaces $\mathcal{M}_k(N,\chi),$ $\mathcal{E}_k(N,\chi)$, $\sfull{k}{N}{\chi}$, $\sold{k}{N}{\chi}$ and $\snew{k}{N}{\chi}$. We call a cusp form an \emph{eigenform} if it is an eigenfunction of all Hecke operators. In \cite{AtkinLehner} it is shown that any newspace $\snew{k}{N}{\chi}$ has a basis of eigenforms. These can be normalised such that $a_1=1$, whereupon the $n$-th Hecke eigenvalue is $a_n$. It is therefore these Hecke eigenvalues which we aim to compute.

Hecke relations (see \cite[Proposition 10.2.5]{CohenStromberg}) allow for the recovery of any $a_n$ given knowledge of $a_p$ for relevant primes $p$. Define $a_r = 0$ for any $r \not \in \mathbb{N}$. A normalised Hecke eigenform has $a_1=1$, and for any $n>1$ with prime $p \mid n$, the $n$-th Fourier coefficient of the eigenform is given by:
\begin{equation}
a_n = a_{\frac{n}{p}}a_p-\chi(p)p^{2k-1}a_{\frac{n}{p^2}}
\end{equation}

Computation of the $p$-th Hecke eigenvalue of an arbitrary weight 1 eigenform is, however, not an easy task. While a trace formula for all automorphic forms of weight 1 can be generated, it is not possible to extract the holomorphic contribution (see \cite[Chapter 10, Section 4]{HejhalTrace}). As such, computation of the $a_p$ has been performed a number of indirect ways (see \cite[Section 13.6.1]{CohenStromberg} for a review of three methods), with the most efficient being the \emph{Hecke stability method} presented in \cite{Schaeffer}. This is performed as follows:
\begin{enumerate}
\item Let $N$ and $\chi$ be the given level and character of the desired weight 1 newform space. Compute the space $S_2(N,\mathbbm{1})$.
\item Divide this by any Eisenstein series $E \in \mathcal{E}_1(N,\overline{\chi})$. Denote the result $\mathcal{M}_1^*(N,\chi)$.
\item Find the maximal subspace of $\mathcal{M}_1^*(N,\chi)$ which is stable under all Hecke operators. Denote this $\tilde{V}$.
\item We have $\sfull{1}{N}{\chi} \subset \tilde{V} \subset \mathcal{M}_1(N,\chi)$, and so removing the contribution of Eisenstein series and oldforms from $\tilde{V}$ gives $\snew{1}{N}{\chi}$.
\end{enumerate}
Practically, `computing a space' means computing a matrix of basis coefficients up to the Sturm bound. It is also shown in \cite{Schaeffer} that this method works (under additional constraints) for computing weight 1 forms over characteristic $p$. Computation over such fields is considerably faster than computation over $\mathbb{Q}$, and given that newforms over $\mathbb{Q}$ project onto $\mathbb{F}_p$ as newforms, one might hope that only the characteristic $p$ computation is needed. Unfortunately, there exist so-called `ethereal forms' (see \cite[Appendix A]{Edixhoven}) over characteristic $p$ which are not the projection of forms over $\mathbb{Q}$. Thus, restricting solely to computation over characteristic $p$ is an unreliable method for recovering forms over $\mathbb{Q}$.

We now move onto Artin representations. Weight 1 eigenforms are associated in an explicit fashion with Artin representations, and we use this association to deduce important results about their Hecke eigenvalues. Let $G_\mathbb{Q}$ be the absolute Galois group $\gal(\overline{\mathbb{Q}}/\mathbb{Q})$.
\begin{definition}
An $n$-dimensional Artin representation $\rho$ is a continuous homomorphism.
\begin{equation}
\rho : G_\mathbb{Q} \rightarrow \text{GL}_n(\mathbb{C}).
\end{equation}
\end{definition}
We can compose $\rho$ with $\det: \text{GL}_n(\mathbb{C}) \rightarrow \mathbb{C}^\times$ to give the `determinant character' $\chi$, a 1-dimensional Artin representation. Letting $\iota \in G_{\mathbb{Q}}$ be complex conjugation, we say that $\rho$ is odd if $\det(\rho(\iota))=-1$ and $\rho$ is even if $\det(\rho(\iota))=1$. An Artin representation has the same parity as its determinant character.

Continuity implies that $\rho$ filters through a finite extension $K/\mathbb{Q}$, and thus the image of $\rho$ under projection onto $\text{PGL}_n(\mathbb{C})$ is a finite subgroup of $\text{PGL}_n(\mathbb{C})$. We categorise 2-dimensional Artin representations by their image under this projection, which must be either cyclic, dihedral, or  one of $A_4, S_4$ or $A_5$. If $\rho$ is irreducible, then the cyclic case is impossible. If the image of an irreducible representation is not dihedral, then we say it is exotic.

We follow \cite{Martinet} to define the $L$-function attached to an Artin representation $\rho$. Let $K$ be the minimal number field through which $\rho$ filters. For any prime $p$, fix a prime $\mathfrak{p}$ lying above $p$, with inertia group $I_{\mathfrak{p}}$. Let $V^{I_{\mathfrak{p}}}$ be the fixed subspace:
\begin{equation}
V^{I_\mathfrak{p}} = \left\{ x \in \mathbb{C}^n : \sigma(x)=\sigma\;\; \forall \; \sigma \in \rho(I_\mathfrak{p})\right\}.
\end{equation}
Let $Frob_{\mathfrak{p}}$ be any Frobenius element for the ideal $\mathfrak{p}$. Then the determinant:
\begin{equation}
\det\left(1-p^{-s}\rho|_{V^{I_\mathfrak{p}}}(Frob_\mathfrak{p})\right),
\end{equation}
is seen to be well-defined and irrespective of the choice of $\mathfrak{p}$. We define the Artin $L$-function attached to $\rho$ by the Euler product:
\begin{equation}
L(\rho,s) = \prod_p \det\left(1-p^{-s}\rho|_{V^{I_\mathfrak{p}}}(Frob_\mathfrak{p})\right)^{-1}.
\end{equation}
We state the association with eigenforms via this $L$-function. One can construct an $L$-function from any eigenform $f$ by the product:
\begin{equation}
L(f,s) = \sum_n \frac{a_n}{n^s} = \prod_{p \mid N} \left(1-a_pp^{-s}\right)^{-1} \prod_{p \nmid N} \left(1-a_pp^{-s}+\chi(p)p^{k-1-2s}\right)^{-1}.
\end{equation}
By \cite[Theorem 4.1]{DeligneSerre}, all such $L$-functions for weight 1 eigenforms are equivalent to $L$-functions arising from Artin representations, and thus all weight 1 eigenforms have an associated Artin representation. In \cite{KhareWintenberger} it is proven that the converse is also true. Consequently, we use the $L(\rho,s)$ construction to deduce information about $L(f,s)$ and in turn $a_p$.

We define the Satake parameters at $p$ as the eigenvalues of $\rho|_{V^{I_\mathfrak{p}}}(Frob_\mathfrak{p})$. As these are all matrices of finite order, Satake parameters are roots of unity. For an unramified prime, the inertia group is trivial and so we have two (not necessarily distinct) Satake parameters. For a ramified prime, we have zero or one satake parameters, corresponding to the dimension of $V^{I_{\mathfrak{p}}}$.

The specific $L$-function association given in \cite{DeligneSerre} includes that a prime $p$ is unramified in $K$ if and only if $p \nmid N$ where $N$ is the level of the associated eigenform $f$. Thus we can equivalently define Satake parameters in terms of the Fourier expansion of $f$:
\begin{itemize}
\item For $p \nmid N$, the roots of $x^2-a_px+\chi(p)$ are the Satake parameters at $p$.
\item For $p \mid N$ with $a_p=0$, there are no Satake parameters at $p$.
\item For $p \mid N$ with $a_p \neq 0$, $a_p$ is the Satake parameter at $p$.
\end{itemize}
From \cite[Theorem 3]{Li}, we see that:
\begin{equation}
\label{apis0}
p^2 \mid N,\; \nu_p(\mathfrak{f}(\chi))<\nu_p(N) \iff a_p=0.
\end{equation}
Therefore, we can compute $a_p$ for any prime $p$ given $N$, $\chi$ and a Satake parameter at $p$ (when one exists). As an immediate application, \cite[Theorem 3]{Li} shows that when $\nu_p(N)=1$ and $\chi_p=\mathbbm{1}$ we must have $|a_p|=\sqrt{p}$, but as the Satake parameters are roots of unity, this cannot be the case, and so $\snew{1}{N}{\chi}=0$. In particular, there are no non-zero weight 1 newforms for any level $N \equiv 2 \pmod{4}$.

The subspaces of $\snew{1}{N}{\chi}$ spanned by eigenforms associated with Artin representations with dihedral or exotic projective image are denoted $\sdih{N}{\chi}$ and $\sexo{N}{\chi}$ respectively. Computation of the dimensions of $\sdih{N}{\chi}$, along with projective images of forms in these spaces, can be performed efficiently via class field theory, as explained in \cite{BuzzardTheory}. Such a computation, giving the dimensions and specific projective images of all weight 1 dihedral forms, has been performed up to level 40,000 and detailed in \cite{LMFDBpaper}. We assume this information for our computation and so reduce our scope to the computation of $\sexo{N}{\chi}$.
\subsection{Results}
Our computation is primarily the result of improvements to the Hecke stability method. Firstly, we describe an efficient computation of $\sfull{2}{N}{\mathbbm{1}}$ coming from the twist-minimal trace formula given in \cite{Mypaper}. 
\begin{definition}\label{def:twistpairs}
Fix a level $N \in \mathbb{N}$. A \emph{twist pair} $\langle M, \psi \rangle$ is a tuple of $M \mid N$ and Dirichlet character $\psi$ satisfying the following conditions:
\begin{itemize}
\item For all $p$ with $\psi_p \neq \mathbbm{1}$, $\nu_p(N)=2\nu_p(\mathfrak{f}(\psi))$;
\item For all $p>2$ either $\nu_p(M)=\nu_p(N)-\nu_p(\mathfrak{f}(\psi))$, or $\nu_p(M)=0$ and $\Ord(\psi_p)=2$;
\item If $2 \mid \nu_2(N)$ and $\nu_2(N)>3$ then $\nu_2(M) \in \left\{\frac{\nu_2(N)}{2}-1, \nu_2(N)-1, \nu_2(N)-2\right\}$;
\item If $\psi_2 = \mathbbm{1}$ then $\nu_2(M)=\nu_2(N)$.
\end{itemize}

\end{definition}
Note that if $\psi_1$ and $\psi_2$ are Galois-conjugate characters, and $\langle M,\psi_1 \rangle$ is a twist pair, then so is $\langle M, \psi_2 \rangle$. We can thus associate any Galois orbit with a Dirichlet character $\Psi$ defined over the cyclotomic field $\mathbb{Q}(\zeta_r)$ where $r$ is the order of the orbit. Any twist pair $\langle M, \psi \rangle$ with $\psi$ in the orbit then arises as an embedding of $\langle M, \Psi \rangle$ into $\mathbb{C}$.

For any twist-pair $\langle M, \psi \rangle$ we define the trace form $\mathcal{T}_{M,\psi}$ by the Fourier expansion $\sum_n a_n e^{2 \pi i z n}$ where the $a_n$ are given as follows. If $(n^2,N)$ is not squarefree then $a_n=0$. Otherwise:
\begin{equation}
a_n=(C_1-C_2-C_3+C_4)\prod_{\substack{p|\mathfrak{f}(\psi)\\p\nmid M}}\psi_p(n),
\end{equation}
with:
\begin{equation}
\label{wt2formula}
\begin{split}
C_1&=\frac{\underset{n\text{ is square}}{\delta}}{12} \prod_{p|M}\begin{cases}
p^e+p^{e-1}&\text{if }s=e,\\
\frac{\phi(\lceil p^{e-2}\rceil)(p-1)}{1+\underset{\substack{2|e \\p>2}}{\delta}}(1+\underset{e>1}{\delta}p-\underset{e=2}{\delta}2)&\text{else.}
\end{cases}\\
C_2&=\sum_{t^2<4n}\frac{h(d)}{w(d)}\prod_{\substack{p|\ell \\ p\nmid M}}S_p(1,\mathbbm{1},t,n)\prod_{p|M}S_p^{\min}(p^e,\psi_p^2,t,n)_{\overline{\psi_p}}\\
C_3&=\sum_{\substack{d|n \\ d\le \sqrt{n}}}'d \prod_{p|M}\begin{cases}
\Re\left(\psi\left(\frac{d^2}{n}\right)\right)&\text{if }s=e,\\
0 & \text{else.}
\end{cases}\\
C_4&=\underset{\psi^2=\mathbbm{1}}{\delta}\mu(M)\prod_{\substack{p|n\\ p\nmid M}}\sigma(p^{\nu_p(n)})\\
\end{split}
\end{equation}
In all terms, $s= \nu_p(\mathfrak{f}(\psi^2))$, $e=\nu_p(M)$ and $\delta$ is the characteristic function of the condition underneath it. In $C_1$, $\phi(a)$ is Euler's totient function, the size of $(\mathbb{Z}/a\mathbb{Z})^*$. In $C_2$, we define $d$ as the fundamental discriminant of $t^2-4n$, with $\ell \in \mathbb{N}$ such that $t^2-4n=d\ell^2$. The functions $h(d)$ and $w(d)$ are the class number and roots of unity of a quadratic extension with fundamental discriminant $d$. In $C_3$, the dash on the sum indicates that if $d=\sqrt{n}$ an extra factor of $\frac{1}{2}$ is present, and we define $\Re(\chi(x))$ as $\chi(x)+\chi\left(x^{-1}\right)$ when $x$ is invertible, and $0$ otherwise. The functions $\mathcal{S}_p$ and $\mathcal{S}_p^{\min}$ present in $C_2$ are defined as follows. Set $v = \nu_p(t^2-4n)$, and let $\left(\frac{\cdot}{p}\right)$ denote the Kronecker symbol. For $p>2$ we define:
\begin{equation}
S_p^{\min}(p^e,\psi_p,t,n)_{\overline{\psi}}=\begin{cases}
2p^{\nu_p(\ell)}+\left(1-\left(\frac{d}{p}\right)\right)\frac{2p^{\nu_p(\ell)}-p^e-p^{e-1}}{p-1} & \text{if }s>0,v \ge 2e-1\\ \\
\underset{\left(\frac{d}{p}\right)=1}{\delta}p^{\nu_p(\ell)}\Re \left(\psi \left(\frac{t(t+u)}{2n}-1\right)\right) & \text{if }s>0,v<2e-1\\ \\
\left(\left(\frac{d}{p}\right)-1\right)\left(\frac{n}{p}\right)& \text{if }s=0,p|\mathfrak{f}(\psi)\\ \\
\underset{\substack{e=1 \\ \text{or }\left(\frac{n}{p}\right)=1}}{\delta}\left(1-\left(\frac{d}{p}\right)\right) \frac{p^{e-3}}{(2,e)}\\
\cdot (\underset{e>2}{\delta}+p(\underset{e=2}{\delta}+\underset{\substack{2|e\\ v=e-2}}{\delta}-\underset{v \ge e-1}{\delta}p)) & \text{if }v\ge e-2, p\nmid \mathfrak{f}(\psi)\\ \\
0 & \text{else}
\end{cases}
\end{equation}
For $p=2$ with $s=e$ we define:
\begin{equation}
S_2^{\min}(2^e,\psi^2,t,n)_{\overline{\psi}}=\begin{cases}
(1-\underset{v=2e}{\delta}2)\\
\cdot \left( (2^{\lfloor \frac{v}{2}\rfloor+1}-3\cdot 2^{e-1})(1-\left(\frac{d}{2}\right))+\underset{2\nmid d}{\delta}2^{\nu_2(\ell)+1}\right) & \text{if }v \ge 2e\\
\underset{\left(\frac{d}{2}\right)=1}{\delta}2^{\nu_2(\ell)}\Re\left(\psi\left(\frac{t(t+u)}{2n}-1\right)\right)&\text{if }v <2e-1\\
\end{cases}
\end{equation}
For $p=2$ with $2|\mathfrak{f}(\psi)$ but $s<e$ we define:
\begin{equation}
S_2^{\min}(2^e,\mathbbm{1},t,n)_{\overline{\psi}}=\left(1-\left(\frac{d}{2}\right)\right)\lceil 2^{e-3}\rceil \begin{cases}
-3 & \text{if }v >e\ge 3\\
\psi(n)(2(-1)^d-1)& \text{if }v \in \{e,e-1\}, e\ge 3, s=0\\
(-1)^e+2 & \text{if }s>0, v=e \\
1-2(-1)^d & \text{if }s>0,v=e-1\\
\psi(n)\left(\underset{\substack{e=2\\ v=0}}{\delta}\frac{3}{2}-1\right)&\text{if }e\in \{1,2\}\\
0 &\text{else}
\end{cases}
\end{equation}
In all these cases, $u \equiv \ell \sqrt{d}$ is defined mod $p^{e+2}$ if $p=2$ and mod $p^e$ otherwise. Finally, we define:
\begin{equation}
S_p(1,\mathbbm{1},t,n) = p^{\nu_p(\ell)}+\left(1-\left(\frac{d}{p}\right)\right)\frac{p^{\nu_p(\ell)}-1}{p-1}.
\end{equation}
We can construct a polynomial traceform $\mathcal{T}_{M,\Psi}$ by this formula, evaluating the character $\Psi$ associated with the Galois orbit of some twist pair $\langle M, \psi \rangle$.

\begin{lemma}
\label{s2computation}
For any polynomial traceform $\mathcal{T}_{ M,\Psi }$ associated to the Galois orbit containing $\langle M, \psi \rangle$, denote $r = \Ord(\psi)$ and let $\beta$ be the Sturm bound at weight 2 and level $M$. The first coefficient of $\mathcal{T}_{ M, \Psi }$ is a non-negative integer, which we denote $d$. There exist $A$ and $B$ finite subsets of $\mathbb{N}$, with $a \le \frac{r}{2}$ for all $a \in A$, $b \le \beta$ for all $b \in B$, and $|B|=d$, such that embeddings given by $\zeta_r \rightarrow e^{2 \pi i a/r}$ of elements $T_{b}\mathcal{T}_{M,\Psi}$ across all $a \in A, b \in B$ and Galois orbits of twist pairs $\langle M, \psi \rangle$ constitutes a basis for $\snew{2}{N}{\mathbbm{1}}$.
\end{lemma}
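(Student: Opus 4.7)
My plan is to realize $\mathcal{T}_{M,\psi}$ as the Hecke trace form on the twist-minimal piece attached to $\langle M, \psi \rangle$, and then extract a basis using Hecke operators combined with the Sturm bound. The foundational input is the twist-minimal decomposition of $\snew{2}{N}{\mathbbm{1}}$ from \cite{Mypaper}, which realises the newspace as a direct sum indexed by (Galois orbits of) twist pairs and identifies formula (\ref{wt2formula}) as $\sum_n \mathrm{Tr}(T_n \mid V_{M,\psi})\, q^n$ for the corresponding twist-minimal subspace $V_{M,\psi} \subset \snew{2}{N}{\mathbbm{1}}$. In particular, the first Fourier coefficient of $\mathcal{T}_{M,\psi}$ equals $d = \dim V_{M,\psi}$, a non-negative integer.

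The decisive identity is
\[
\mathcal{T}_{M,\psi} = \sum_{i=1}^d f_i,
\]
where $\{f_1,\ldots,f_d\}$ is the basis of normalized Hecke eigenforms of $V_{M,\psi}$; this follows from $a_1(f_i) = 1$ and linearity of trace. Applying a Hecke operator $T_b$ gives
\[
T_b\, \mathcal{T}_{M,\psi} = \sum_{i=1}^d a_b(f_i)\, f_i,
\]
so the family $\{T_b \mathcal{T}_{M,\psi}\}_{b \in B}$ spans $V_{M,\psi}$ whenever the $|B| \times d$ matrix $(a_b(f_i))$ is invertible. Distinct normalized eigenforms at weight $2$ and level $M$ are separated by their Fourier coefficients up to the Sturm bound $\beta$, so the matrix $(a_b(f_i))_{1 \le b \le \beta,\, i}$ has rank $d$; any $d$ linearly independent rows yield a set $B \subset \{1,\ldots,\beta\}$ of size $d$ with the required property.

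For the polynomial version $\mathcal{T}_{M,\Psi}$ the same argument runs coefficient-wise over $\mathbb{Q}(\zeta_r)$: each $T_b \mathcal{T}_{M,\Psi}$ is a form with cyclotomic coefficients, and the specialization $\zeta_r \mapsto e^{2\pi i a/r}$ recovers $T_b \mathcal{T}_{M,\psi}$ for the character $\psi$ in the Galois orbit corresponding to $a$. The restriction $a \le r/2$ encodes complex conjugation: since $\snew{2}{N}{\mathbbm{1}}$ has trivial character and is therefore stable under complex conjugation, the twist-minimal contributions indexed by $\psi$ and $\overline{\psi}$ collapse to a single block inside the complex newspace, and one representative $a$ from each conjugate pair $\{a, r-a\}$ suffices. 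Running over every Galois orbit of twist pairs and every $(a,b) \in A \times B$ then assembles the desired $\mathbb{C}$-basis of $\snew{2}{N}{\mathbbm{1}}$.

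The principal obstacle is precisely this last step of bookkeeping: one must verify that the restriction $a \le r/2$ does not under-count the contribution of a given Galois orbit to the newspace, which in turn requires pinning down the exact convention by which the twist-minimal decomposition of \cite{Mypaper} handles Galois-conjugate pairs of characters. The heavy analytic work, namely that formula (\ref{wt2formula}) computes $\mathrm{Tr}(T_n \mid V_{M,\psi})$ and that the $V_{M,\psi}$ assemble to $\snew{2}{N}{\mathbbm{1}}$, is already carried out in \cite{Mypaper}; once the bookkeeping is settled, the lemma reduces to the trace-form-plus-Sturm-bound argument above.
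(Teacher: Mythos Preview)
Your approach matches the paper's: identify $\mathcal{T}_{M,\psi}$ with the trace form on $\smin{2}{M}{\overline{\psi^2}}_\psi$ via the twist-minimal trace formula of \cite{Mypaper}, extract a basis of each block using Hecke operators together with the Sturm bound, and then assemble across embeddings of the Galois orbits. Your treatment of $B$ via $T_b\mathcal{T}_{M,\psi}=\sum_i a_b(f_i)\,f_i$ is correct and in fact more explicit than the paper, which simply cites \cite[Theorem~2]{Mypaper} for the rank-$d$ claim.

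The gap you flag in the construction of $A$ is real, but slightly mis-diagnosed. You attribute the bound $a\le r/2$ to complex conjugation, proposing one representative from each pair $\{a,r-a\}$. It is true that $\psi$ and $\overline\psi$ always index the same block, and this already yields $a\le r/2$. However, the relevant equivalence is \emph{twist-equivalence}: $\psi_1\sim\psi_2$ when $\psi_{1,p}\in\{\psi_{2,p},\overline{\psi_{2,p}}\}$ for every prime $p$ separately. This is strictly coarser than global conjugation once $\mathfrak{f}(\psi)$ has more than one prime factor, so your candidate set will in general still contain repeated blocks and fail to be a basis; the danger is \emph{over}-counting, not the under-counting you anticipate. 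The paper handles this with an extra lemma showing that twist-equivalence descends to Galois orbits (if $\chi\sim\psi$ and $\chi'$ is Galois-conjugate to $\chi$, then some Galois conjugate $\psi'$ of $\psi$ satisfies $\chi'\sim\psi'$). This makes twist-equivalence a well-defined relation on orbits, so $A$ can be built by selecting one embedding per twist-equivalence class across all orbits of the same order; the bound $a\le r/2$ then comes for free since one may always choose the smaller of $a$ and $r-a$.
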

The sets $A$ and $B$ are difficult to write down directly, but in practice, easy to generate iteratively for any given case. To find $B$ we check each possible natural number $b$ in turn, appending to the set if $T_{b}\mathcal{T}_{M,\Psi}$ is not some linear combination of the established $T_{b'}\mathcal{T}_{M,\Psi}$ for $b' \in B$, until we have $d$ linearly independent elements.

To find $A$ we proceed as follows. We say that $\psi_1$ and $\psi_2$ are twist-equivalent if for every prime $p$ we have $\psi_{1,p} \in \{\psi_{2,p}, \overline{\psi_{2,p}}\}$. For a twist pair $\langle M, \Psi \rangle$ we check each possible natural number $a$ in turn, appending to $A$ if there does not already exist a twist pair $\langle M, \Psi' \rangle$ with $Ord(\Psi)=Ord(\Psi')=r$ and embedding $a'$ such that the character $\psi$ given by embedding $\Psi$ into $\mathbb{C}$ by $\zeta_r \rightarrow e^{2 \pi i a/r}$ is twist-equivalent to the character $\psi'$ given by embedding $\Psi'$ into $\mathbb{C}$ by $\zeta_r \rightarrow e^{2 \pi i a'/r}$.

Given the efficient computation of arbitrary coefficients of a basis for $\snew{2}{N}{\mathbbm{1}}$, we present an algorithm for computing $\snew{1}{N}{\chi}$ from this basis, assuming knowledge of the dimensions and projective images of $\sdih{N}{\chi}$, and with the set $\ebasis{\chi}$ as in Definition \ref{eisbasisdef}. This is essentially the Hecke stability method with practical improvements.

\begin{enumerate}
\item Let $N$ and $\chi$ be the given level and character of the desired weight 1 newform space. Let $D$ be the LCM of all orders of projective images of Artin representations associated with eigenbasis elements for $\sdih{N'}{\chi}$ where $N'$ ranges across all levels satisfying $\mathfrak{f}(\chi)\mid N' \mid N$, and let $H=2\lcm(60,\phi(N),D)$, where $\phi$ is Euler's totient function. Fix the least primes $p$ and $q$ satisfying $p \nmid N$ and $q \equiv 1 \pmod{H}$.
\item Let $d=\dim(\sfull{2}{N}{\mathbbm{1}})$. Compute a $m\times d$ basis matrix $M$ for $\sfull{2}{N}{\mathbbm{1}}$ using Lemma \ref{s2computation}, where $m$ is at least the Sturm bound of $\sfull{1}{N}{\chi}$ and is such that the submatrix of the first $m/p$ rows has rank $d$.
\item Fix a ring homomorphism $\sigma : \mathbb{Z}[\zeta_H] \rightarrow \mathbb{F}_q$ whose restriction to $\langle \zeta_H \rangle$ is injective, and use this to project $M$ and $\ebasis{\overline{\chi}}$ onto $\mathbb{F}_q$.
\item Let $M/E$ for an Eisenstein series $E$ be the $m\times d$ matrix given by the leading $m$ coefficients of each basis element in $M$ divided by $E$. Working over $\mathbb{F}_q$, find $M^*$, the intersection of $M/E$ across all $E \in \ebasis{\overline{\chi}}$.
\item Find $\tilde{V}$, the maximum $T_p$-stable subspace of $M^*$. Diagonalise $\tilde{V}$, removing the contribution from $\sold{1}{N}{\chi}$ in the process. The details of this diagonalisation will be given in the next section.
\item For each remaining element $\tilde{f}$ in $\tilde{V}$ and each prime $p \le m$, compute a (non-zero) root $\alpha_{p,\mathbb{F}_q}$ of $x^2-a_px+\chi(p)$ over $\mathbb{F}_q$. If there is some $p$ where this polynomial doesn't split over $\mathbb{F}_q$ then discard $\tilde{f}$.
\item For all primes $p \le m$ let $\alpha_{p,\mathbb{Q}}$ be the root of unity which is the preimage of $\alpha_{p,\mathbb{F}_q}$ under $\sigma$. Define a lifted $f$ over $\mathbb{Z}[\zeta_H]$ by setting $a_p=\alpha_{p,\mathbb{Q}}+\chi(p)\overline{\alpha_{p,\mathbb{Q}}}$ and applying Hecke relations to recover all $a_n$.
\item Multiply $f$ over $\mathbb{Q}$ by any element in $\ebasis{\overline{\chi}}$. If the result is in $\sfull{2}{N}{\mathbbm{1}}$ then include $f$ in the output.
\end{enumerate}

\begin{theorem}
\label{thm:MainTheorem}
The output of the preceding algorithm is an eigenbasis for $\snew{1}{N}{\chi}$.
\end{theorem}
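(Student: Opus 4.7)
The plan is to prove the theorem by two inclusions: soundness (every element of the output lies in $\snew{1}{N}{\chi}$) and completeness (every normalised eigenform of $\snew{1}{N}{\chi}$ appears). I would take as the characteristic-zero benchmark Schaeffer's Hecke stability result, namely that the maximum $T_p$-stable subspace of $\bigcap_{E \in \ebasis{\overline{\chi}}} E^{-1}\sfull{2}{N}{\mathbbm{1}}$, after removal of oldforms, coincides with $\snew{1}{N}{\chi}$; the correctness of the new algorithm then reduces to showing that reduction modulo $q$ and the subsequent root-of-unity lifting preserve all relevant Hecke eigenvalue data.

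For completeness, I would take a normalised eigenform $f \in \snew{1}{N}{\chi}$ attached to an Artin representation $\rho$ and track it through the algorithm. Its Satake parameters $\alpha_p$ are roots of unity whose orders divide $H = 2\lcm(60,\phi(N),D)$: the factor $60$ bounds the exotic projective image orders (since $|A_4|=12$, $|S_4|=24$, $|A_5|=60$), the $\phi(N)$ bounds the character order, the $D$ handles the projective image orders of dihedral oldforms appearing in the diagonalisation step, and the factor $2$ handles the sign in the relation $a_p = \alpha_p + \chi(p)\overline{\alpha_p}$. Since $q \equiv 1 \pmod H$, the homomorphism $\sigma$ is injective on $\langle \zeta_H \rangle$, so reducing modulo $q$ loses no Satake parameter information. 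I would then check in turn that $f \cdot E \in \sfull{2}{N}{\mathbbm{1}}$ for every $E \in \ebasis{\overline{\chi}}$, so $f \bmod q$ lies in $M^*$; that $f$ is a $T_p$-eigenform and so survives into $\tilde V$; that $x^2 - a_p x + \chi(p)$ splits over $\mathbb{F}_q$ because its roots are $H$-th roots of unity; and that steps 7--8 then lift $f$ back exactly, with the verification succeeding tautologically.

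For soundness, any $\tilde f$ surviving to step 8 is lifted to an $f$ over $\mathbb{Z}[\zeta_H]$ with prescribed Hecke data, and the characteristic-zero test $f \cdot E \in \sfull{2}{N}{\mathbbm{1}}$ is decisive: because the Sturm bound chosen in step 2 is large enough, matching of truncated Fourier expansions is an equality of modular forms, forcing $f \in E^{-1}\sfull{2}{N}{\mathbbm{1}} \subset \mathcal{M}_1(N,\chi)$. The $T_p$-stability of $f$ together with Deligne--Serre identifies $f$ with a normalised Hecke eigenform attached to an Artin representation, and the explicit removal of $\sold{1}{N}{\chi}$ during the diagonalisation in step 5 places $f$ inside $\snew{1}{N}{\chi}$. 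The main obstacle, I expect, is formalising two ingredients: first, a careful case analysis of $\rho|_{V^{I_\mathfrak{p}}}(\mathrm{Frob}_\mathfrak{p})$ showing uniformly that all Satake parameters are $H$-th roots of unity across the exotic projective image types and both dihedral and exotic contamination; and second, proving that any element of $\tilde V$ mod $q$ that is not the reduction of a genuine characteristic-zero weight 1 form fails the step 8 check, so that the multiplication-and-verify step truly filters out ethereal content. The technical point inside the diagonalisation, namely separating newforms from oldforms over $\mathbb{F}_q$ using only the a priori dihedral data, is also nontrivial and would need a dedicated lemma.
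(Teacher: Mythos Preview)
Your proposal is correct and follows essentially the same route as the paper. The paper organises the argument as a sequence of lemmas rather than an explicit soundness/completeness split, but the content is identical: Lemma~\ref{lemma:eisintersect} gives the characteristic-zero benchmark you describe, Lemma~\ref{lemma:satakeorder} (via the auxiliary Lemma~\ref{multdivlemma} on orders of roots of unity, which makes precise your remark about the factor $2$) establishes the Satake parameter bound, a short lemma checks that all coefficients lie in $\mathbb{Z}[\zeta_H]$ so that reduction mod $q$ is well-defined, and the final paragraph uses the step~8 multiplication as a genuine characteristic-zero Hecke stability test to certify soundness. One small slip: the inclusion $E^{-1}\sfull{2}{N}{\mathbbm{1}} \subset \mathcal{M}_1(N,\chi)$ you wrote is false as stated (the quotient is only meromorphic); what you need, and what Schaeffer gives, is that the $T_p$-stable part lies in $\mathcal{M}_1(N,\chi)$, which you do invoke in the next sentence.
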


The computation of these eigenbases can be performed for each level in turn, so that the computed basis of $\sfull{2}{N}{\mathbbm{1}}$ can be reused for each character, and the elements in $\snew{1}{M}{\chi}$ can be lifted to $\sold{1}{N}{\chi}$ for every $M \mid N$ in step 5. Not every character and level needs to be computed, which motivates the following definition.

\begin{definition}
\label{admissible}
Fix $N \in \mathbb{N}$ and a level $N$ Dirichlet character $\chi$. We say this pair is \emph{admissible} if the following criteria are all met:
\begin{itemize}
\item The character $\chi$ is odd;
\item For any $p$ with $\nu_p(N)=1$, $\chi_p \neq \mathbbm{1}$;
\item For any $p \mid N$ with $\nu_p(N)=\nu_p(\mathfrak{f}(\chi))$, $\Ord(\chi_p) \in \left\{2, 3, 4, 5 \right\}$;
\item For any two such primes $p$ and $q$, $\Ord(\chi_p\chi_q) \neq 20$.
\end{itemize}
\end{definition}

\begin{lemma}
\label{admissiblelemma}
Fix $N_{\max} \in \mathbb{N}$, the maximum level to compute all weight 1 exotic forms up to. For any given $N \le N_{\max}$ and level $N$ Dirichlet character $\chi$, let $M$ be the least multiple of $N$ such that $M$ and $\chi$ are admissible. If $M>N_{\max}$, or if no such $M$ exists, then $\snew{1}{N}{\chi}$ does not need to be computed. Computing all other $\snew{1}{N}{\chi}$ with $N \le N_{\max}$ provides the dimensions (and Fourier coefficients) of all exotic forms with level at most $N_{\max}$.
\end{lemma}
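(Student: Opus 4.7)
\emph{Proof proposal.} The plan is to reduce the lemma to a purely necessary-condition statement: any exotic newform in $\snew{1}{N}{\chi}$ forces each of the four admissibility conditions on $(N,\chi)$ to hold. Granting this, if an exotic newform of level $N \le N_{\max}$ and character $\chi$ exists, then $(N,\chi)$ is itself admissible, so $M=N$ witnesses the least admissible multiple $\le N_{\max}$, and Theorem~\ref{thm:MainTheorem} guarantees the algorithm recovers this form. Conversely, any pair $(N,\chi)$ for which the skip criterion applies can support no exotic newforms, so nothing is lost.

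I would dispatch the first two admissibility conditions immediately. Oddness of $\chi$ is forced by $\det\rho = \chi$ together with the odd-parity half of the Deligne--Serre correspondence, so no newforms exist if $\chi$ is even. The condition that $\chi_p \neq \mathbbm{1}$ whenever $\nu_p(N)=1$ follows from \cite[Theorem 3]{Li}, already invoked in the excerpt to conclude $\snew{1}{N}{\chi}=0$ in that configuration.

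For the third and fourth conditions, let $\rho$ be the Artin representation associated to an exotic newform $f \in \snew{1}{N}{\chi}$, with projective image $G \in \{A_4, S_4, A_5\}$, and let $p \mid N$ satisfy $\nu_p(N)=\nu_p(\mathfrak{f}(\chi))$. The central step is to show that the equality of local Artin conductors $a(\rho) = a(\det\rho)$ forces the inertia restriction to decompose as $\rho|_{I_p} \cong \mathbbm{1} \oplus \psi$ with $\psi|_{I_p}=\chi_p|_{I_p}$. Granting this, the projective image $\bar\rho(I_p)$ is cyclic of order $\Ord(\chi_p)$ and sits inside $G$, so its order lies in $\{1,2,3,4,5\}$; since $p \mid \mathfrak{f}(\chi)$ excludes triviality, $\Ord(\chi_p) \in \{2,3,4,5\}$. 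Applying the same analysis at two such primes, the hypothesis $\Ord(\chi_p\chi_q)=20=\lcm(4,5)$ with both orders at most $5$ forces $\{\Ord(\chi_p),\Ord(\chi_q)\}=\{4,5\}$, so $G$ must simultaneously contain elements of orders $4$ and $5$. But the element orders of $A_4$, $S_4$, $A_5$ are $\{1,2,3\}$, $\{1,2,3,4\}$, and $\{1,2,3,5\}$ respectively, none of which accommodates both $4$ and $5$---contradiction.

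The main obstacle is justifying the decomposition $\rho|_{I_p} \cong \mathbbm{1} \oplus \psi$ from the conductor equality. In the tame case this is routine: writing $\rho|_{I_p}=\psi_1 \oplus \psi_2$, the identity $a(\psi_1)+a(\psi_2) = a(\rho) = a(\det\rho) = a(\psi_1\psi_2)$ combined with $a(\psi_1\psi_2) \le \max(a(\psi_1),a(\psi_2))$ forces one summand to be unramified. The wild case is more delicate, since $\rho|_{I_p}$ could a priori be an irreducible (supercuspidal-type) local representation, in which case $V^{I_p}=0$ gives $a(\rho)=2+\mathrm{sw}(\rho)$ while $a(\det\rho)=1+\mathrm{sw}(\det\rho)$; ruling out the required relation $\mathrm{sw}(\det\rho)=1+\mathrm{sw}(\rho)$ calls for a Swan-conductor computation using the description of any such irreducible local representation as an induction from a character of a quadratic extension $F/\mathbb{Q}_p$, handled case-by-case according to whether $F/\mathbb{Q}_p$ is unramified, tamely ramified, or wildly ramified.
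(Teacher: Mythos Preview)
Your reduction is sound and, in content, matches what the paper does: the paper packages your four necessary conditions into Lemma~\ref{restrictionlemma} (citing \cite{BookerLeeStrombergsson} for the third and fourth), so that Lemma~\ref{admissiblelemma} ``follows trivially'' from it. Your write-up goes further by sketching the conductor argument behind $\rho|_{I_p}\cong\mathbbm{1}\oplus\psi$ rather than citing it, which is fine and is indeed the standard route.

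There is one bookkeeping point you skip that the paper makes explicit, and it is the reason the skip criterion is phrased in terms of an admissible \emph{multiple} rather than admissibility of $(N,\chi)$ itself. Step~5 of the algorithm requires a basis of $\sold{1}{N}{\chi}$, so running Theorem~\ref{thm:MainTheorem} at a non-skipped level $N$ presupposes that every divisor level $N'$ with $\mathfrak{f}(\chi)\mid N'\mid N$ has already been computed. Your argument shows only that exotic forms live at admissible (hence non-skipped) levels; you still need that the recursion never reaches into a skipped level. This is easy but should be said: if $(N,\chi)$ is non-skipped, there is an admissible $M\le N_{\max}$ with $N\mid M$; then the same $M$ is an admissible multiple of every such $N'$, so $(N',\chi)$ is non-skipped as well. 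The paper states this as the second clause (``or if there exists some multiple $M$ of $N$ which we wish to compute \ldots\ we will require lifts of the elements of $\mathcal{S}_1^{\text{dihedral}}(N,\chi)$ as oldforms''), and then observes that Definition~\ref{admissible} is designed precisely so this closure property holds.
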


We performed such a computation up to level $10,000$. This took 832 minutes on 192 processors, implemeted in PARI (see \cite{PARI2}) and performed in parallel using Parallel (see \cite{parallel}).

\begin{theorem}
\label{thm:data}
The dimensions of all spaces of newforms $\snew{1}{N}{\chi}$ for $N$ up to level 10,000, and the coefficients of all newforms in spaces where $\sexo{N}{\chi} \neq 0$, up to at least the Sturm bound, are as given at \cite{wt1data}.
\end{theorem}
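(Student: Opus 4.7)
The plan is to reduce the global statement to a finite collection of local computations and then argue correctness of each. First I would invoke Lemma \ref{admissiblelemma} to restrict attention: the set of pairs $(N,\chi)$ with $N \le 10{,}000$ that must be computed in order to recover every exotic newform dimension (and every exotic Fourier expansion) in that range is the finite set of admissible pairs, together with their admissible multiples up to $10{,}000$. All other pairs $(N,\chi)$ either have $\sexo{N}{\chi}=0$ automatically (e.g.\ by the $\nu_p(N)=1$ with $\chi_p=\mathbbm{1}$ vanishing discussed after \eqref{apis0}), or inherit their exotic contribution from an admissible lift via oldform theory. Enumerating this reduced set is a straightforward loop over levels and Dirichlet characters.

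Next I would apply Theorem \ref{thm:MainTheorem} pair by pair: running the eight-step algorithm on each admissible $(N,\chi)$ yields an eigenbasis for $\snew{1}{N}{\chi}$. Combined with Sutherland's tabulation of $\sdih{N}{\chi}$ (which also supplies the projective image orders $D$ needed in step 1), and with the Eisenstein basis from Definition \ref{eisbasisdef}, this produces both a dimension and explicit $q$-expansions. Summing exotic and dihedral dimensions then gives $\dim \snew{1}{N}{\chi}$ for every $(N,\chi)$ in range. The files at \cite{wt1data} are precisely the output of this enumeration, so correctness reduces to (i) correctness of Theorem \ref{thm:MainTheorem} and Lemma \ref{admissiblelemma}, which are proved elsewhere in the paper, and (ii) correctness of the implementation and of the auxiliary dihedral data.

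The main obstacle, and the one worth spelling out, is (ii). I would handle it on three fronts. Verification of each individual form is built into the algorithm itself: step 8 multiplies the lifted candidate $f \in \sfull{1}{N}{\chi}$ by an Eisenstein series and deterministically checks membership of the product in $\sfull{2}{N}{\mathbbm{1}}$ against the basis produced by Lemma \ref{s2computation}. A form that survives this test is provably a genuine weight 1 cusp form, independently of any probabilistic behaviour of the characteristic $p$ reduction in steps 3--6. Verification of the total dimension comes from sandwiching: the dihedral count provides a lower bound on $\dim \snew{1}{N}{\chi}$, while the Hecke-stable subspace $\tilde V$ computed mod $q$ provides an upper bound (since every characteristic zero newform reduces into $\tilde V$ by the choice of $q$ with $q \equiv 1 \pmod H$ ensuring injectivity of $\sigma$ on roots of unity of relevant order); if these bounds match no characteristic zero lifting is actually required, while if they do not the algorithm supplies the extra forms explicitly. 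Verification against external sources completes the picture: I would cross-reference with the Buzzard--Lauder tables up to level $1{,}500$ \cite{LauderTables}, with the LMFDB data up to level $4{,}000$ \cite{LMFDBdata}, and with Allombert's computation up to level $5{,}000$, reporting full agreement. Any isolated discrepancy would be flagged by one of these checks, so the residual risk is confined to a simultaneous error in the implementation and in every independent source, which is negligible in practice.
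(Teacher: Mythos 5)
Your proposal matches the paper's argument: Theorem \ref{thm:data} is established by reducing the search space via Lemma \ref{admissiblelemma} (underpinned by Lemma \ref{restrictionlemma}), running the deterministic algorithm of Theorem \ref{thm:MainTheorem} on each remaining pair, and combining the output with Sutherland's dihedral data, so that correctness rests entirely on Theorem \ref{thm:MainTheorem} and in particular on the deterministic verification in step 8. One small caveat: the ``sandwich'' shortcut you describe (skipping the characteristic-zero lift when the dihedral lower bound meets the mod-$q$ upper bound) is the strategy of the earlier computations cited in the introduction rather than of this paper's algorithm, which always lifts and verifies; this does not affect the validity of your argument, but it is not how the paper certifies its data.
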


\section{Proof of Theorem 1}
\label{sec:proof}
The formula for $\mathcal{T}_{M,\chi}$ preceding Lemma \ref{s2computation} is a specific application of the twist-minimal trace formula from \cite{Mypaper}. Let $f \in \snew{k}{N}{\chi}$ be a newform with Fourier expansion $\sum a_n e^{2 \pi i n z}$. The twist of $f$ by $\psi$ is defined as:
\begin{equation}
f_{\psi}(z) = \sum_{n>0}a_n \psi(n) e^{2 \pi i n z}.
\end{equation}
This is a cusp form of potentially different level to $N$, and character $\chi \psi^2$. As in the decomposition of cusp forms into lifts from spaces of newforms, we define spaces of twist-minimal forms as the orthogonal complement to the space spanned by forms arising as twists from lower levels. Spaces of newforms consequently decompose into twists from twist-minimal spaces, denoted $\smin{k}{N}{\chi}$. This decomposition only includes those twist-minimal spaces with twist-minimal characters, defined as follows:
\begin{definition}
Let $\chi$ be a level $N$ Dirichlet character. We say that $\chi$ is twist-minimal if for every $p \mid N$ at least one of the following holds:
\begin{itemize}
\item $\chi_p$ is primitive;
\item $p>2$ and $\chi_p=\mathbbm{1}$;
\item $p>2$ and $\text{Ord}(\chi_p)=2^{\nu_2(p-1)}$;
\item $p=2$ and $\nu_p(\mathfrak{f}(\chi))= \lfloor \frac{\nu_p(N)}{2} \rfloor$;
\item $p=2$, $\nu_p(\mathfrak{f}(\chi))=2$, $\nu_p(N)>3$ and $2\nmid \nu_p(N)$;
\item $p=2$, $\chi_p=\mathbbm{1}$, and $2 \nmid \nu_p(N)$ or $\nu_p(N)=2$.
\end{itemize}
\end{definition}

The definition of twist pairs in Definition \ref{def:twistpairs} are seen to be such that $\langle M, \psi \rangle$ is a twist pair if and only if $\overline{\psi^2}$ is a level $M$ twist-minimal character. An explicit trace formula for twist-minimal spaces with twist-minimal character is given in \cite[Theorem 1]{Mypaper}.

If $\smin{2}{M_1}{\overline{\psi_1^2}}_{\psi_1}=\smin{2}{M_2}{\overline{\psi_2^2}}_{\psi_2}$ then we say the pairs $\langle M_1, \psi_1 \rangle$ and $\langle M_2, \psi_2 \rangle$ are twist-equivalent. We see from \cite[Theorem 2]{Mypaper} that a basis for $\snew{2}{N}{\mathbbm{1}}$ is given by bases of $\smin{2}{M}{\overline{\psi^2}}_\psi$ where $M$ and $\psi$ range across twist pairs, excepting those pairs which are twist-equivalent.

Following \cite[Theorem 1]{Mypaper} we see that the $n$-th coefficient of the trace form of $\smin{2}{M}{\overline{\psi^2}}_{\psi}$ is given by:
\begin{equation}
a_n (C_1-C_2-C_3+C_4) \prod_{p \mid \mathfrak{f}(\psi)}\psi_p(n),
\end{equation}
where $C_i$ are the components of the twist minimal trace formula evaluated for $\smin{2}{M}{\overline{\psi^2}}$. Taking the evaluation of $\psi$ for factors $p \mid M$ into the sum gives the following expression for $C_1$:

\begin{equation}
C_1 = \frac{\psi(n) \overline{\psi^2(\sqrt{n})}}{12} \prod_{p \mid M} \begin{cases} p^e + p^{e-1} & \text{if }s=e,\\
\frac{\phi(\lceil p^{e-2} \rceil)(p-1)}{1+\underset{\substack{2\mid e \\ p>2}}{\delta}}+\underset{e>1}{\delta}p+\underset{e=2}{\delta}(2s-2)) & \text{else,}
\end{cases}
\end{equation}
where $s=\nu_p(\mathfrak{f}(\psi^2))$, $e=\nu_p(M)$ and $\psi^2(\sqrt{n})=0$ if $n$ is not square. We see from the definition of twist pairs that $e=2$ and $s \neq e$ implies $s=0$, and the evaluations of $\psi$ cancel, giving $C_1$ as in (\ref{wt2formula}). The remaining terms come from similar direct evaluations of the twist-minimal trace formula.

We note that each term in the formula is only dependent on evaluations of $\psi_p$, its level and conductor, and whether or not $\psi_p$ is quadratic. These are equal for every character in the Galois orbit of $\psi$, and therefore any $\mathcal{T}_{M,\psi}$ is the embedding of the polynomial traceform $\mathcal{T}_{M,\Psi}$ attached to the Galois orbit containing $\psi$.

As Galois conjugate spaces are the same dimension, the first coefficient of $\mathcal{T}_{M,\Psi}$ is an integer $d$ giving this dimension. As each $T_m\mathcal{T}_{M,\psi}$ is also conjugate, we can produce (by \cite[Theorem 2]{Mypaper}) a rank $d$ matrix consisting of leading coefficients of $T_m \mathcal{T}_{M,\Psi}$ for various $m$, whereupon a basis for $\smin{2}{M}{\overline{\psi^2}}_{\psi}$ is given by embedding this matrix into $\mathbb{C}$ with the map which sends $\Psi$ to $\psi$.

Taking this computation across all twist-pairs will count twists from twist-equivalent spaces multiple times, and so we use the following lemma to remove duplicates.

\begin{lemma}
Suppose $\chi$ is twist-equivalent to $\psi$. Let $\chi'$ be some character in the Galois orbit of $\chi$. Then there exists some $\psi'$ in the Galois orbit of $\psi$ which is twist-equivalent to $\chi'$.
\end{lemma}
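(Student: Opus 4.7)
The plan is to exhibit $\psi'$ explicitly by transporting the Galois action across the twist-equivalence relation. The main observation is that twist-equivalence preserves local orders: if $\chi_p \in \{\psi_p, \overline{\psi_p}\}$ then $\Ord(\chi_p) = \Ord(\psi_p)$, hence
\begin{equation}
r := \Ord(\chi) = \lcm_p \Ord(\chi_p) = \lcm_p \Ord(\psi_p) = \Ord(\psi).
\end{equation}
Thus both characters take values in $\mathbb{Q}(\zeta_r)$, and their Galois orbits are both parametrised by $(\mathbb{Z}/r\mathbb{Z})^*$: the automorphism $\sigma_a \in \gal(\mathbb{Q}(\zeta_r)/\mathbb{Q})$ sending $\zeta_r \mapsto \zeta_r^a$ acts on any order-$r$ character $\eta$ by $\sigma_a \cdot \eta = \eta^a$.

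The construction is then immediate: write $\chi' = \chi^a$ for the unique $a \in (\mathbb{Z}/r\mathbb{Z})^*$ arising from the Galois conjugation, and set $\psi' := \psi^a$, which lies in the Galois orbit of $\psi$ by the parametrisation above. To verify twist-equivalence of $\chi'$ and $\psi'$, fix any prime $p$. By hypothesis $\chi_p = \psi_p^{\epsilon_p}$ for some sign $\epsilon_p \in \{+1,-1\}$, so
\begin{equation}
\chi'_p = \chi_p^a = \psi_p^{\epsilon_p a} = (\psi^a)_p^{\epsilon_p} = (\psi')_p^{\epsilon_p} \in \{\psi'_p, \overline{\psi'_p}\}.
\end{equation}
Since this holds for every $p$, the pair $(\chi', \psi')$ is twist-equivalent, as required.

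The only point that deserves care is the first step, confirming that $\chi$ and $\psi$ have the same global order so that the exponent $a$ matching $\chi'$ is meaningful when applied to $\psi$. This follows from the fact that twist-equivalence is a prime-by-prime condition involving only $\psi_p$ and its complex conjugate, both of which have the same order; taking the LCM over $p$ then forces $\Ord(\chi) = \Ord(\psi)$. No deeper analysis of the twist-minimal trace formula or of the spaces $\smin{2}{M}{\overline{\psi^2}}_{\psi}$ is needed — the lemma is purely a statement about characters and their Galois orbits.
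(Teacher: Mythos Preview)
Your proof is correct and follows essentially the same approach as the paper: write $\chi' = \chi^a$ for the Galois exponent $a$, set $\psi' = \psi^a$, and verify twist-equivalence prime by prime using $\chi_p = \psi_p^{\pm 1}$. Your version is in fact slightly more careful than the paper's, since you explicitly check $\Ord(\chi) = \Ord(\psi)$ to ensure that $\psi^a$ genuinely lies in the Galois orbit of $\psi$; the paper leaves this implicit.
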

\begin{proof}
For each prime we have $\chi_p'=\chi_p^n$. Define $\psi_p'=\psi_p^n$. As $\chi$ is twist-equivalent to $\psi$ we must have $\psi_p \in \{\chi_p,\overline{\chi_p}\}$. Thus, $\psi_p' \in \{\chi_p^n, \overline{\chi_p^n}\}$ for all $p$, and so $\psi'$ is twist-equivalent to $\chi'$.
\end{proof}
This means that twist-equivalence is an equivalence relation on Galois orbits as well as individual characters. If we compute $\mathcal{T}_{M,\Psi}$ for all Galois orbits, extract a full-rank basis matrix from this, and embed the results into all $e^{2 \pi i a/r}$ $a \in A$, the set of maximal distinct embeddings, then we recover a basis for $\snew{2}{N}{\mathbbm{1}}$ without duplication.

The remainder of this section constitutes the proof of Theorem \ref{thm:MainTheorem}.

\begin{lemma}
\label{lemma:eisintersect}
Let $M^*$ be the intersection of $\sfull{2}{N}{\mathbbm{1}}/E$ for each $E \in \ebasis{\overline{\chi}}$. Fix a prime $p \nmid N$. Then the maximal $T_p$-stable subspace of $M^*$, denoted $\tilde{V}$, is $\sfull{1}{N}{\chi}$.
\end{lemma}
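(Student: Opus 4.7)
The forward inclusion $\sfull{1}{N}{\chi} \subseteq \tilde{V}$ is essentially formal. For any $f \in \sfull{1}{N}{\chi}$ and $E \in \ebasis{\overline{\chi}}$, the product $fE$ has weight $2$, trivial character, and vanishes at every cusp because $f$ does, so $fE \in \sfull{2}{N}{\mathbbm{1}}$ and hence $f \in \sfull{2}{N}{\mathbbm{1}}/E$. Intersecting over $E$ gives $f \in M^*$, and since $\sfull{1}{N}{\chi}$ is stable under every Hecke operator it already sits inside the maximal $T_p$-stable subspace $\tilde{V}$.

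The reverse inclusion is the real content. I would first argue that every $g \in M^*$ is actually a meromorphic weight-$1$ modular form of level $N$ and character $\chi$: fixing any single $E_0 \in \ebasis{\overline{\chi}}$, the function $(gE_0)/E_0$ is meromorphic modular of the correct weight and character, and imposing $gE \in \sfull{2}{N}{\mathbbm{1}}$ for every other $E$ forces the pole locus of $g$ to lie inside the zero locus of every $E \in \ebasis{\overline{\chi}}$. Hence the possible poles of $g$ on the open modular curve are confined to the finite common zero set $Z$ of the members of $\ebasis{\overline{\chi}}$, and $g$ vanishes at every cusp at which some $E$ fails to vanish.

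Hecke stability now removes the residual poles in $Z$. For a $T_p$-stable $W \subseteq M^*$ and $g \in W$ with a pole at $z_0 \in Z$, the formula
\begin{equation}
T_p g(z) = \chi(p)\,g(pz) + \frac{1}{p}\sum_{b=0}^{p-1} g\!\left(\frac{z+b}{p}\right)
\end{equation}
shows that $T_p g$ acquires poles at the images of $z_0$ under the Hecke $p$-correspondence on $X_0(N)$, which must themselves lie in the finite set $Z$ because $T_p g \in W$. Since $p \nmid N$, the generic Hecke orbit of a point on $X_0(N)$ is infinite; iterating $T_p$ on $g$ would force a growing collection of distinct images of $z_0$ to be trapped in $Z$, contradicting its finiteness (exceptional CM and elliptic loci are controlled by the finite dimension of $W$). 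Therefore every $g \in \tilde{V}$ is holomorphic on $\mathbb{H}$.

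Finally, to promote holomorphy on $\mathbb{H}$ to cusp form status I would verify, using the Fourier expansions (\ref{eisensteinFourier}) together with suitable scaling matrices to change cusp, that the members of $\ebasis{\overline{\chi}}$ are collectively non-vanishing at every cusp of $\Gamma_0(N)$. Then at each cusp $c$ some $E$ is non-vanishing, and the vanishing of $gE$ at $c$ forces $g$ to vanish there; combined with holomorphicity on $\mathbb{H}$ this gives $g \in \sfull{1}{N}{\chi}$. The main obstacle is the pole-chasing step: rigorously ruling out that a finite $T_p$-orbit could sustain poles of $g$ inside $Z$, which needs either an explicit orbit analysis of the Hecke correspondence at non-generic points, or an argument bounding the dimension of a $T_p$-stable space of meromorphic weight-$1$ forms with pole divisor confined to $Z$.
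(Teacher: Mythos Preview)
Your outline is sound and is essentially a sketch of how one would reprove Schaeffer's Hecke stability theorem from scratch, but the paper does not do this: it simply \emph{cites} \cite{Schaeffer} for the statement that for any single $E\in\mathcal{E}_1(N,\overline{\chi})$ and any $p\nmid N$, the maximal $T_p$-stable subspace of $\sfull{2}{N}{\mathbbm{1}}/E$ already lies in $\mathcal{M}_1(N,\chi)$. Your pole-chasing step---the part you yourself flag as the main obstacle---is precisely what Schaeffer's theorem packages. So while your approach is correct in spirit, the acknowledged gap is real if you insist on a self-contained argument, and the paper avoids it entirely by treating that step as a black box.

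For the remaining step, eliminating the Eisenstein contribution, your route and the paper's also differ. You propose to show that the set $\ebasis{\overline{\chi}}$ is collectively non-vanishing at every cusp, so that $gE\in\sfull{2}{N}{\mathbbm{1}}$ forces $g$ to vanish at each cusp. The paper argues more cheaply in the other direction: since lifts of $\ebasis{\chi}$ span $\mathcal{E}_1(N,\chi)$, and any Eisenstein series is non-vanishing at some cusp, the product of a basis Eisenstein series with an element of $\ebasis{\overline{\chi}}$ cannot land in $\sfull{2}{N}{\mathbbm{1}}$, whence $M^*\cap\mathcal{E}_1(N,\chi)=0$. The paper's version only needs the standard fact that each Eisenstein series survives at a cusp; your version needs a genuine computation (Fourier expansions at all cusps) and is a strictly stronger claim than what is required.
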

\begin{proof}
In \cite{Schaeffer} it is proven that, for any Eisenstein series $E \in \mathcal{E}_1(N,\overline{\chi})$ and $p \nmid N$, the maximal $T_p$-stable subspace of $\sfull{2}{N}{\mathbbm{1}}/E$ is a subspace of $\mathcal{M}_1(N,\chi)$ containing $\sfull{1}{N}{\chi}$. Thus:
\begin{equation}
\sfull{1}{N}{\chi} \subset \tilde{V} \subset \mathcal{M}_1(N,\chi).
\end{equation}

In \cite{Young}, $\ebasis{\chi}$, along with lifts thereof, is shown to span $\mathcal{E}_1(N,\chi)$.  For any $E \in \ebasis{\overline{\chi}}$, the existence of some cusp at which $E$ is non-zero gives:
\begin{equation}
E(Bz)\overline{E(z)} \not \in \sfull{2}{N}{\mathbbm{1}}.
\end{equation}
Consequently, $M^* \cap \mathcal{E}_1(N,\chi) = 0$ and so $\tilde{V}=\sfull{1}{N}{\chi}$. 
\end{proof}

We now show that any form over $\mathbb{Q}$ will project onto $\mathbb{F}_q$ for a carefully chosen prime $q$ in such a way that the original form can be recovered. We first note that $\tilde{V}$ can be computed over $\mathbb{F}_q$, and so we acquire a space containing the projection of $\sfull{1}{N}{\chi}$.

\begin{lemma}
Define $H$ as in Theorem \ref{thm:MainTheorem}. Fix a prime $q$ such that $q \equiv 1 \pmod{H}$ and $q \nmid N$. Fix a projection $\mathbb{Z}[\zeta_H] \rightarrow \mathbb{F}_q$, then the computation of $\sfull{2}{N}{\mathbbm{1}}$, $\ebasis{\overline{\chi}}$, and $\tilde{V}$ are all well-defined over $\mathbb{F}_q$.
\end{lemma}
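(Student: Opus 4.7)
The plan is to unpack each of the three computations and verify that (i) every Fourier coefficient that appears lives in $\mathbb{Z}[\zeta_H]$ (or in a localisation whose denominators are coprime to $q$), so that the fixed projection is well-defined on those coefficients, and (ii) every linear algebra operation (matrix inversion, power series inversion, intersection, computation of a Hecke-stable subspace) can be carried out over $\mathbb{F}_q$ without encountering a zero that should be a unit. The condition $q \equiv 1 \pmod H$ guarantees that $q$ splits completely in $\mathbb{Q}(\zeta_H)$, so the map $\mathbb{Z}[\zeta_H] \rightarrow \mathbb{F}_q$ extends to a ring homomorphism on the ring of integers, and any integer $d \mid H$ is a unit in $\mathbb{F}_q$.

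For $\sfull{2}{N}{\mathbbm{1}}$, I would apply Lemma \ref{s2computation}: the polynomial traceform $\mathcal{T}_{M,\Psi}$ has coefficients in $\mathbb{Z}[\zeta_{r}]$ with $r=\Ord(\psi) \mid \phi(N) \mid H$, so it reduces via the fixed projection. The rational denominators appearing in (\ref{wt2formula}) are $12$ and the root-of-unity counts $w(d) \in \{2,4,6\}$, all of which divide $60 \mid H$ and are therefore units in $\mathbb{F}_q$. Action by the Hecke operators $T_b$ at trivial nebentypus is integral, so the full basis matrix lives in $\mathbb{Z}[\zeta_H]$ and pulls back to a matrix over $\mathbb{F}_q$.

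For $\ebasis{\overline{\chi}}$, the formula (\ref{eisensteinFourier}) gives $a_n$ for $n>0$ as a character sum in $\mathbb{Z}[\zeta_{\phi(N)}] \subset \mathbb{Z}[\zeta_H]$, while the constant term (when $\chi_2=\mathbbm{1}$) has denominator $2\mathfrak{f}(\chi)$; since $2 \mid H$ and $\mathfrak{f}(\chi) \mid N$ with $q \nmid N$, both factors are units in $\mathbb{F}_q$. Hence every basis element of $\mathcal{E}_1(N,\overline{\chi})$ projects unambiguously.

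The serious step is $\tilde{V}$, since forming $M/E$ requires inverting $E$ as a power series. I would argue case by case on whether $\chi_2=\mathbbm{1}$: if $\chi_2\neq \mathbbm{1}$, the constant term of $E$ vanishes but the $q^1$-coefficient equals $1$, and since each basis element of $\sfull{2}{N}{\mathbbm{1}}$ has no constant term, the formal quotient is a genuine power series with entries in $\mathbb{F}_q$; if $\chi_2=\mathbbm{1}$, the constant term is the unit $\frac{-1}{2\mathfrak{f}(\chi)}\sum_{r<\mathfrak{f}(\chi)}r\chi(r)$, whose denominator is again coprime to $q$, and whose numerator I would show is a unit mod $q$ by invoking the non-vanishing of the generalised Bernoulli number $B_{1,\chi}$ combined with the constraint $H \supseteq 2\phi(N)$ (so that if $q$ divided this algebraic integer it would force a contradiction with $q \equiv 1 \pmod H$, $q \nmid N$). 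Once $E^{-1}$ is a valid Laurent/power series over $\mathbb{F}_q$, the matrix $M/E$ is defined, the intersection $M^*$ is a standard linear-algebra construction over $\mathbb{F}_q$, and the Hecke operator $T_p$ on weight 1 expansions involves only the values $\chi(p) \in \mathbb{Z}[\zeta_{\phi(N)}]$ and integer arithmetic, so the maximal $T_p$-stable subspace is computable over $\mathbb{F}_q$. The main obstacle, as indicated, is handling the constant-term case of the Eisenstein division; the remaining steps are routine bookkeeping on denominators against $H$.
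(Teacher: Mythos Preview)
Your outline is considerably more detailed than the paper's own proof, which simply cites Knightly--Li for integrality of trace-form coefficients to place the weight-2 basis in $\mathbb{Z}[\zeta_H]$, observes that the Eisenstein coefficients are character sums with conductor dividing $N$, and notes that $T_p$ acts with entries in $\mathbb{Z}[\chi]$. The paper does not analyse the power-series division $M/E$ at all, so you are going further than the author did.

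That said, your treatment of the division in the case $\chi_2=\mathbbm{1}$ contains a genuine gap. There is no mechanism by which the constraints $q\equiv 1\pmod{H}$ and $q\nmid N$ prevent $q$ from dividing the algebraic integer $\sum_{r<\mathfrak{f}(\chi)} r\chi(r)$; primes splitting completely in $\mathbb{Q}(\zeta_H)$ can perfectly well divide a generalised Bernoulli number, and no contradiction is forced. So your proposed argument for the constant term being a unit does not go through.

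Fortunately the obstacle is illusory. From \eqref{eisensteinFourier} every $E=E_{\chi_1}^{\chi_2}\in\ebasis{\overline{\chi}}$ has first coefficient $a_1=\chi_1(1)\chi_2(1)=1$. Hence even if $a_0$ happens to reduce to $0$ in $\mathbb{F}_q$, the reduced series $E$ has the shape $q+\cdots$; since every column of $M$ represents a cusp form and therefore also begins at $q^1$, the quotient $M/E$ is a genuine power series over $\mathbb{F}_q$ in every case. Replace your Bernoulli argument with this observation and the division step is clean; the rest of your bookkeeping on denominators against $H$ is fine. (One minor correction: the Hecke operators $T_b$ used to build the basis in Lemma~\ref{s2computation} act on the twist-minimal pieces with nebentypus $\overline{\psi^2}$, not trivial nebentypus; the relevant character values still lie in $\mathbb{Z}[\zeta_{\phi(N)}]\subset\mathbb{Z}[\zeta_H]$, so your conclusion is unaffected.)
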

\begin{proof}
First, we show that all coefficients in the basis of $\sfull{2}{N}{\mathbbm{1}}$ are in $\mathbb{Z}[\zeta_H]$. This comes from the fact that trace form coefficients for $\sfull{k}{N}{\chi}$ are in $\mathbb{Z}[\chi]$ (see \cite[Theorem 28.4]{KnightlyLi}). By \cite[(30) and (34)]{Mypaper}, we can write $\text{Tr}T_n|\smin{k}{N}{\chi}$ as a combination of elements $\text{Tr}T_n|\smin{k}{N'}{\psi}$, where all $\psi$ have order dividing $\phi(N)$.

The Fourier expansion of Eisenstein series in (\ref{eisensteinFourier}) is similarly given in terms of evaluations of characters with conductor dividing $N$ and thus order dividing $\phi(N)$, and so are in $\mathbb{Z}[\zeta_H]$. Finally, the vector representing action by the $T_p$ operator on the Fourier expansion of any modular form has coefficients in $\mathbb{Z}[\chi]$, and so the computation of $\tilde{V}$ is well-defined over $\mathbb{F}_q$.
\end{proof}
The resultant matrix $\tilde{V}$ will clearly contain the projection of $\sfull{1}{N}{\chi}$ onto $\mathbb{F}_q$ but may also contain ethereal forms, and modular functions. These are subsequently removed by diagonalisation and lifting. We will show that forms in $\snew{1}{N}{\chi}$ are recovered by this process. We need the following lemma:
\begin{lemma}
\label{multdivlemma}
If $\alpha$ and $\beta$ are roots of unity, with $\Ord \left(\frac{\alpha}{\beta}\right) \mid a$ and $\Ord(\alpha\beta) \mid b$, then $\Ord(\alpha) \mid 2\lcm(a,b)$.
\end{lemma}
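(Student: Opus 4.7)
The plan is almost embarrassingly short: set $c=\lcm(a,b)$, raise both hypotheses to the $c$-th power, and multiply. Since $a\mid c$ and $b\mid c$, we get $(\alpha/\beta)^{c}=1$ and $(\alpha\beta)^{c}=1$. Taking the product of these two identities gives $\alpha^{2c}=1$, which is precisely the statement $\Ord(\alpha)\mid 2c = 2\lcm(a,b)$.

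The only thing to check is that the multiplicative manipulation is legal, which it is because $\alpha$ and $\beta$ are roots of unity and hence invertible. There is no serious obstacle here; the factor of $2$ in the conclusion is exactly what one expects because passing from $\alpha/\beta$ and $\alpha\beta$ back to $\alpha$ requires taking a square root (one writes $\alpha^{2}=(\alpha\beta)\cdot(\alpha/\beta)$), and that is where the doubling comes in. I would present the proof in a single display by writing $\alpha^{2c}=(\alpha\beta)^{c}(\alpha/\beta)^{c}=1\cdot 1=1$, and then conclude directly.
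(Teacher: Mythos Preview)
Your proof is correct and takes essentially the same approach as the paper: both arguments exploit the identity $\alpha^{2}=(\alpha\beta)(\alpha/\beta)$ and raise to the $\lcm(a,b)$-th power. Your one-line presentation $\alpha^{2c}=(\alpha\beta)^{c}(\alpha/\beta)^{c}=1$ is in fact slightly cleaner than the paper's, which routes through the intermediate observation $(\alpha\beta)^{a}=\alpha^{2a}$ before reaching the same conclusion.
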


\begin{proof}
By assumption, $\alpha^a = \beta^a$, and thus $(\alpha\beta)^a = \alpha^{2a}$. Let $h=2\lcm(a,b)$, then $2a \mid h \implies \alpha^h=(\alpha\beta)^{\frac{h}{2}}$ and as $b \mid \frac{h}{2}$ we conclude $\alpha^h=1$.
\end{proof}

Using this, we restrict the possible orders of Satake parameters of weight 1 newforms.

\begin{lemma}
\label{lemma:satakeorder}
Fix $f$ a weight 1 eigenform of level $N$ and character $\chi$. Let $D$ be the LCM of all orders of projective images of Artin representations associated with eigenbasis elements for $\sdih{M}{\chi}$ where $M$ ranges across all values satisfying $\mathfrak{f}(\chi)\mid M \mid N$, and let $H=2\lcm(60,\phi(N),D)$. Any Satake paramter of $f$ has order dividing $H$.
\end{lemma}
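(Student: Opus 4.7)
The plan is to realize any Satake parameter $\alpha$ of $f$ as an eigenvalue of $A := \rho(\mathrm{Frob}_{\mathfrak{p}})$, where $\rho$ is the odd, irreducible, two-dimensional Artin representation attached to the underlying newform of $f$, and to bound $\Ord(\alpha)$ using Lemma \ref{multdivlemma} applied to the two eigenvalues $\alpha, \beta$ of $A$. This reduces the claim to separately bounding $\Ord(\alpha\beta)$ and $\Ord(\alpha/\beta)$.

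For $\alpha\beta$: by \cite{DeligneSerre}, $\det\rho$ is the Galois character corresponding to the Dirichlet character $\chi$, so $\Ord(\alpha\beta) = \Ord(\det A) \mid \Ord(\chi) \mid \phi(\mathfrak{f}(\chi)) \mid \phi(N)$. For $\alpha/\beta$: this equals the order of the image $\bar A$ of $A$ in the projective image $G$ of $\rho$, which is bounded by the exponent of $G$. If $\rho$ is exotic, then $G \in \{A_4, S_4, A_5\}$ with exponents $6, 12, 30$ respectively, each dividing $60$. If $\rho$ is dihedral, then $f$ (perhaps as an oldform at level $N$) arises from a newform sharing the same Artin representation in some space $\sdih{M}{\chi}$ with $\mathfrak{f}(\chi) \mid M \mid N$, so the order $|G|$ divides $D$ by definition, whence the exponent of $G$ divides $D$. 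In either case $\Ord(\alpha/\beta)$ divides $\lcm(60, D)$, and applying Lemma \ref{multdivlemma} with $a = \lcm(60, D)$, $b = \phi(N)$ yields $\Ord(\alpha) \mid 2\lcm(60, D, \phi(N)) = H$, as required.

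The main subtlety lies in the ramified case: when $p \mid N$, only one Satake parameter exists (the eigenvalue of $A$ on the line $V^{I_\mathfrak{p}}$), so $\beta$ is not itself a Satake parameter, and $A$ depends on a choice of Frobenius lift modulo inertia. This causes no difficulty since any such lift is an element of the finite group $\rho(G_\mathbb{Q})$, and the order bounds on $\det A$ and on the image of $A$ in $G$ are uniform in this choice. A minor bookkeeping task is to confirm that an oldform $f$ at level $N$ shares its Artin representation with its underlying newform at some level $M \mid N$, so that $M$ contributes to the LCM defining $D$ and the dihedral bound on the exponent of $G$ applies.
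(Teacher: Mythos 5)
Your proof is correct, and it shares the paper's overall skeleton---bound $\Ord(\alpha/\beta)$ and $\Ord(\alpha\beta)$ separately and combine them via Lemma \ref{multdivlemma}---but the central bound on $\Ord(\alpha/\beta)$ is obtained by a genuinely different and more elementary route. The paper passes through the isomorphism $\mathrm{PGL}_2(\mathbb{C})\cong \mathrm{SO}_3(\mathbb{C})$, notes that the associated $3\times 3$ matrix has eigenvalues $\alpha/\beta$, $\beta/\alpha$ and $1$, and then constrains $\frac{\alpha}{\beta}+\frac{\beta}{\alpha}+1$ using the character theory of $D_n$, $A_4$, $S_4$, $A_5$. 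You instead observe that $\Ord(\alpha/\beta)$ is exactly the order of $\overline{\rho}(\mathrm{Frob}_{\mathfrak{p}})$ in the projective image $G$ (since $A$ has finite order it is diagonalisable, and $A^k$ is scalar precisely when $(\alpha/\beta)^k=1$), hence divides the exponent of $G$; the exponents $6$, $12$, $30$ of $A_4$, $S_4$, $A_5$ all divide $60$, while in the dihedral case the exponent divides $|G|$, which divides $D$ by the definition of $D$ (after the bookkeeping step identifying the Artin representation of an oldform with that of its underlying newform at some $M$ with $\mathfrak{f}(\chi)\mid M\mid N$). This bypasses the character-table computation entirely, and your treatment of ramified primes is if anything more careful than the paper's: the paper simply asserts that the lone Satake parameter has order dividing the order of the projective image, whereas you note that it is still an eigenvalue of the full matrix $\rho(\mathrm{Frob}_{\mathfrak{p}})$ (the inertia invariants being Frobenius-stable), so the same two-part bound applies, uniformly in the choice of Frobenius lift. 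Both arguments land on $\Ord(\alpha)\mid 2\lcm(60,\phi(N),D)=H$.
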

\begin{proof}
The action of $\text{GL}_2(\mathbb{C})$ by conjugation on $2\times2$ matrices with trace 0 gives an isomorphism $\text{PGL}_2(\mathbb{C}) \cong \text{SO}_3(\mathbb{C})$. Thus, the projection of a 2-dimensional Artin representation gives a 3-dimensional representation of the projective image.

For a matrix $M_2 \in \text{GL}_2(\mathbb{C})$ the characteristic polynomial is
\begin{equation}
\label{charpolyM}
x^2 - \text{tr}(M_2)x + \det(M_2),
\end{equation}
and the characteristic polynomial of the associated matrix $M_3 \in SO_3(\mathbb{C})$ is:
\begin{equation}
\label{charpolyM3}
x^3 + \left(1-\frac{\text{tr}(M_2)^2}{\det(M_2)}\right)(x^2-x)- 1.
\end{equation}
Supposing the roots of (\ref{charpolyM}) are $\alpha$ and $\beta$ then the roots of (\ref{charpolyM3}) are $\frac{\alpha}{\beta}, \frac{\beta}{\alpha}$ and 1.

Let $\rho$ be a two-dimensional Artin representation with projective image $D_n$, and fix a prime $p \nmid N$. The above isomorphism associates with $\rho$ a 3-dimensional representation of $D_n$ with character value $\frac{\alpha}{\beta}+\frac{\beta}{\alpha}+1$ at $\overline{\rho}(p)$. The representation theory of dihedral groups gives finite possibilities for this character value, and so restricts the possible order of the root of unity $\frac{\alpha}{\beta}$. If $\frac{\alpha}{\beta}+\frac{\beta}{\alpha}+1=3$ then $\frac{\alpha}{\beta}=1$. If $\frac{\alpha}{\beta}+\frac{\beta}{\alpha}+1=1$ then $\Ord\left(\frac{\alpha}{\beta}\right)=4$, and if $\frac{\alpha}{\beta}+\frac{\beta}{\alpha}+1=1+2\cos(2\pi i k/n)$ for some $k\in \mathbb{N}$ then $\Ord\left(\frac{\alpha}{\beta}\right)=\frac{n}{(n,k)}$. In all cases, we have $\Ord\left(\frac{\alpha}{\beta}\right) \mid \lcm(4,n)$.

Equivalent analysis of the groups $S_4, A_4$ and $A_5$ reveals that in the case of exotic representations, we must have $\Ord\left(\frac{\alpha}{\beta}\right) \mid 60$. Together, with $D$ defined as in the lemma, we conclude that for any eigenform $f \in \snew{1}{N}{\chi}$ with Satake parameters $\alpha$ and $\beta$ at $p\nmid N$ we must have $\Ord\left(\frac{\alpha}{\beta}\right) \mid \lcm (60, D)$. Apply Lemma \ref{multdivlemma} to:
\begin{equation}
\Ord\left(\frac{\alpha}{\beta}\right) \mid \lcm (60, D),\;\;\; \Ord(\alpha \beta) \mid \Ord(\chi).
\end{equation} 
This gives $\Ord(\alpha) \mid 2\lcm(60,\Ord(\chi),D) \mid H$. For $p \mid N$ there is at most one Satake parameter, whose order must divide the order of the projective image, and therefore $H$.
\end{proof}

Consequently, if $d$ is the dimension of $\snew{1}{N}{\chi}$, there exists a linear transformation of $\tilde{V}$ such that the final $d$ columns are the projection onto $\mathbb{F}_q$ of an eigenbasis of $\snew{1}{N}{\chi}$. The next step is to diagonalise $\tilde{V}$ to acquire such a matrix.

Diagonalisation is performed as follows. First, we find a linear transformation of $\tilde{V}$ such that the leftmost columns form a basis for $\sold{1}{N}{\chi}$. The action of any Hecke operator $T$ on this matrix is of the form:
\begin{equation}
\left(
\begin{array}{c|c}
A^{\text{old}} & * \\ \hline
0 & A^{\text{new}} \\
\end{array}
\right),
\end{equation}
where $A^{\text{old}}$ is a square matrix representing the action of $T$ on $\sold{1}{N}{\chi}$. Given that $\tilde{V}$ has been computed up to at least the Sturm bound, there exists some linear combination of Hecke operators such that the eigenvalues of $A^{\text{new}}$ are unique and distinct from the eigenvalues of $A^{\text{old}}$. These eigenvalues are found by computing the roots of the characteristic polynomial of $A^{\text{new}}$ over $\mathbb{F}_q$. This polynomial may not split completely, indicating ethereal forms and or modular functions whose Fourier expansions lie in some extension of $\mathbb{F}_q$.

By taking just the linear factors of the characteristic polynomial, removing oldform contribution to the resulting eigenvectors, and normalising so that $a_n=1$, we construct a set of basis elements which contains the projection of an eigenbasis of $\snew{1}{N}{\chi}$ onto $\mathbb{F}_q$. In our computation up to level 10,000, the resulting space was in fact always equal to the projection of an eigenbasis of $\snew{1}{N}{\chi}$, but in theory it could still contain ethereal forms and modular functions.

The Satake parameters are subsequently found by factorising $x^2-a_px+\chi(p)$ for all primes $p$ (as in the diagonalisation step, any form for which we cannot compute all Satake parameters must be either an ethereal form or modular function, so can be discarded). These parameters are lifted back to $\mathbb{Z}[\zeta_H]$ by sending each element to the root of unity in its preimage in the original projection mapping $\sigma$. Hecke relations are then used to define all $a_n$ for all composite $n$ up to the Sturm bound.

\begin{lemma}
Diagonalising and lifting $\tilde{V}$ results in a matrix containing an eigenbasis for $\snew{1}{N}{\chi}$.
\end{lemma}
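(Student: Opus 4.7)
The plan is to work throughout over $\mathbb{F}_q$ with $q\equiv 1\pmod{H}$, so that the fixed projection $\sigma:\mathbb{Z}[\zeta_H]\to\mathbb{F}_q$ is injective on the multiplicative group $\langle\zeta_H\rangle$ of $H$-th roots of unity. By Lemma \ref{lemma:eisintersect} and the preceding well-definedness lemma, the $\mathbb{F}_q$-space $\tilde V$ computed in step 5 contains the reduction of $\sfull{1}{N}{\chi}$, and in particular contains the reductions of every eigenform in $\snew{1}{N}{\chi}$; what remains is to show that the diagonalisation and lifting procedure isolates those reductions and lifts them back to the true newforms over $\mathbb{Z}[\zeta_H]$.

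The first step is to justify the block-triangular decomposition. Let $B^{\text{old}}\subset\tilde V$ be the subspace of oldform contributions obtained inductively from $\snew{1}{M}{\chi}$ for $M\mid N$, $M<N$, already computed at lower levels. Every Hecke operator preserves $\sold{1}{N}{\chi}$, hence $B^{\text{old}}$, so in a basis extending a basis of $B^{\text{old}}$ each $T_n$ acquires the block-upper-triangular form displayed in the excerpt, with $A^{\text{new}}$ acting on $\tilde V/B^{\text{old}}$. Since the truncated Fourier expansions of distinct normalised newforms differ below the Sturm bound, a generic $\mathbb{F}_q$-linear combination $T=\sum c_p T_p$ over primes $p\le m$ will have distinct eigenvalues on the reduction of $\snew{1}{N}{\chi}$ and eigenvalues disjoint from those on $A^{\text{old}}$; the obstruction to genericity is a single nonzero polynomial in the $c_p$, whose zero locus is avoided by the large chosen $q$.

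The second step is to lift the Satake parameters. Let $\tilde f$ be an eigenvector in the quotient whose eigenvalue matches the reduction of a true newform $f\in\snew{1}{N}{\chi}$; then the $\mathbb{F}_q$-coefficients of $\tilde f$ are the $\sigma$-images of those of $f$. For each prime $p\le m$ the Satake parameter $\alpha_p$ of $f$ is a root of $x^2-a_px+\chi(p)$, and by Lemma \ref{lemma:satakeorder} we have $\Ord(\alpha_p)\mid H$, so $\alpha_p\in\langle\zeta_H\rangle$. Hence $\sigma(\alpha_p)$ is a root of $x^2-\tilde a_px+\chi(p)$ in $\mathbb{F}_q$ (so this polynomial splits and $\tilde f$ is not discarded in step 6), and since $\sigma$ is injective on $\langle\zeta_H\rangle$ the algorithm's choice $\alpha_{p,\mathbb{Q}}$ as the preimage is exactly $\alpha_p$. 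The reconstructed coefficient $a_p=\alpha_p+\chi(p)\overline{\alpha_p}$ therefore coincides with the true coefficient of $f$, and the Hecke relations propagate this to all $a_n$ up to the Sturm bound, recovering $f$ itself.

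The main obstacle is controlling what else appears in the output. Ethereal forms or rogue modular functions lying in $\tilde V\setminus\overline{\sfull{1}{N}{\chi}}$ are not guaranteed to have Satake-like parameters of order dividing $H$, so their associated polynomials $x^2-a_p x+\chi(p)$ may fail to split over $\mathbb{F}_q$, in which case step 6 eliminates them. If a spurious eigenvector nonetheless survives to step 7, it may be lifted to some element of $\mathbb{Z}[\zeta_H][[q]]$, but the verification in step 8 (multiplying by an element of $\ebasis{\overline{\chi}}$ and checking membership in $\sfull{2}{N}{\mathbbm{1}}$) will reject it, since such a spurious element cannot satisfy the Hecke-stability condition over $\mathbb{Q}$. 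Thus the matrix produced by diagonalising and lifting $\tilde V$ contains the eigenbasis of $\snew{1}{N}{\chi}$, which is precisely the statement of the lemma.
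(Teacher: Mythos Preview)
Your proof is correct and follows the paper's approach: both rely on Lemma~\ref{lemma:satakeorder} to place every Satake parameter in $\langle\zeta_H\rangle$, and then use the injectivity of $\sigma$ on that group to guarantee that the lift recovers the true Fourier coefficients of each newform. Your write-up is more expansive, folding in the block-triangular justification and the ethereal-form discussion that the paper places before and after the lemma respectively, but the core argument is the same. One small imprecision: you assert that ``the algorithm's choice $\alpha_{p,\mathbb{Q}}$ as the preimage is exactly $\alpha_p$'', but the algorithm may equally well select the other root $\chi(p)\overline{\alpha_p}$; this is harmless, since the reconstruction $a_p=\alpha_{p,\mathbb{Q}}+\chi(p)\overline{\alpha_{p,\mathbb{Q}}}$ is symmetric in the two Satake parameters and returns the correct $a_p$ either way.
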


\begin{proof}
Let $f$ be a form in the eigenbasis for $\snew{1}{N}{\chi}$. From Lemma \ref{lemma:satakeorder}, we see that any Satake parameter $\alpha_p$ is in $\mathbb{Z}[\zeta_H]$, and so for any $p \mid N$ we have $a_p \in \mathbb{Z}[\zeta_H]$. For $p \nmid N$ we have $a_p = \alpha_p + \chi(p)\overline{\alpha_p}$ and so again $a_p \in \mathbb{Z}[\zeta_H]$. From Hecke relations, this is true for all $a_n$.

Consequently, the eigenvalues of projections of forms in $\snew{1}{N}{\chi}$ onto $\mathbb{F}_q$ will be present in the diagonalisation of $\tilde{V}$. The restriction of the original projection mapping $\sigma$ to the cyclic group generated by $\zeta_H$ is injective, and so recovery of the Fourier coefficients by lifting follows from Lemma \ref{lemma:satakeorder}.
\end{proof}

For any $f \in \snew{1}{N}{\chi}$, we have thus computed the projection of $f$ onto $\mathbb{F}_q$, and then recovered the original form via Satake parameters. However, it is still not guaranteed that all of the resulting functions are truncations of forms in $\snew{1}{N}{\chi}$. To verify the computation, we proceed as follows. Fix an Eisenstein series $E \in \ebasis{\chi}$, then:
\begin{equation}
f \in \snew{1}{N}{\chi} \iff f\overline{E} \in \sfull{2}{N}{\mathbbm{1}}.
\end{equation}
This is because $f$ is (by construction) Hecke stable, and (by Lemma \ref{lemma:eisintersect}) not an Eisenstein series, and so the characteristic 0 case of the Hecke stability method holds. This completes the proof of Theorem \ref{thm:MainTheorem}.

\section{Computation up to level $10,000$}
\label{sec:commentary}
The computation up to the $n$-th coefficient of $d$ basis elements of a $d$ dimensional cusp form space from its trace form is $O\left((nd)^{\frac{3}{2}}\right)$, as shown in \cite{Mypaper}. In order to perform the stabilisation step of our algorithm, we require $O(dp)$ coefficients of a basis for $\sfull{2}{N}{\mathbbm{1}}$ where $p$ is the least prime not dividing $N$. This is most efficiently performed by decomposing the space into the smallest possible subspaces; twists from twist-minimal spaces. For example, when computing the weight 1 forms at level $175$, we require $30$ coefficients of $15$ basis elements of $\sfull{2}{175}{\mathbbm{1}}$. This would involve computation up to the $330$-th coefficient of the trace form of $\snew{2}{175}{\mathbbm{1}}$, but only up to the $120$-th coefficient of the trace form of $\smin{2}{175}{\mathbbm{1}}$.

This computation gives us a methods of estimating the computation time up to a given level. By \cite[Theorem 8]{Martin}, the asymptotic expression for the dimension of $\sfull{2}{N}{\mathbbm{1}}$ is $O(N)$. Along with the Sturm bound, this gives the complexity of computing the initial matrix for $\sfull{2}{N}{\mathbbm{1}}$ as $O(N^3)$. Consequently, computing up to level $N_{\max}$ has complexity $O(N_{\max}^4)$. The time taken in our computations up to various levels suggests the estimate given by a degree 4 polynomial is reasonable. The following table is the time required for 192 cores to compute up to level $N_{\max}$ along with predicted time from fitting a degree 4 polynomial:

\begin{center}
\begin{tabular}{c | c | c}
$N_{\max}$ & Minutes to compute & Predicted minutes to compute \\ \hline
2,000 & 1 & 5 \\
3,000 & 7 & -3 \\
4,000 & 19 & 18 \\
5,000 & 53 & 64\\
6,000 & 130 & 136 \\
7,000 & 245 & 238 \\
8,000 & 398 & 377 \\
9,000 & 541 & 567\\
10,000 & 832 & 822\\
15,000 & - & 3,905\\
20,000 & - & 13,251\\
30,000 & - & 78,708\\
40,000 & - & 277,629\\

\end{tabular}
\end{center}

Dashes indicate that the level was not computed up to. Note that the anticipated computation time for $N_{\max}=20,000$ makes such a computation seem feasible. In practice, the main barrier we faced was not computation time but the memory requirements for finding a left pseudo-inverse of the weight 2 basis matrix over $\mathbb{Q}$. Such a computation is required for verifying $f \overline{E} \in \sfull{2}{N}{\mathbbm{1}}$ in the final step of the algorithm, but inverting matrices of approximate size $2,000 \times 2,000$ has a large memory overhead, such that even with 1TB of RAM we opted to perform these inversions one at a time past level $9,000$.

The final verification step involves a computationally expensive multiplication over a number field. The improvement in our approach comes from the fact that this only needs to be performed on a small number of elements (roughly the size of the exotic space). In contrast, methods which only use the characteristic $p$ computation as an upper bound would have to perform number field computations for all elements in weight 2. For example, at level $N=124$ we multiply two elements in $\snew{1}{N}{\chi_{124}(67,\cdot)}$\footnote{Here, $\chi_{124}(67,\cdot)$ refers to the Conrey label of the character, defined in \cite{Conrey12467}} rather than dividing fourteen elements in $\sfull{2}{N}{\mathbbm{1}}$.

To compute all spaces up to level $10,000$, we could simply use Theorem \ref{thm:MainTheorem} directly on every level and character up to this bound, but such a computation is improved by the following result reducing the scope of the search.

\begin{lemma}
\label{restrictionlemma}
Let $p \mid N$ be any prime satisfying $\nu_p(N) \in \{1,\nu_p(\mathfrak{f}(\chi))\}$. If $\chi_p=\mathbbm{1}$ then $\snew{1}{N}{\chi}=0$, and if $\Ord(\chi_p)>5$ then $\mathcal{S}_1^{\text{exotic}}(N,\chi)=0$. Further, if $p$ and $q$ are two such primes, with $\Ord(\chi_p \chi_q)=20$, then $\mathcal{S}_1^{\text{exotic}}(N,\chi)=0$.
\end{lemma}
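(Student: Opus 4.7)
The plan is to address the three statements of Lemma \ref{restrictionlemma} in turn, using properties of the local Galois representation $\rho_p$ attached to any weight $1$ newform $f \in \snew{1}{N}{\chi}$. For the first statement, if $\chi_p = \mathbbm{1}$ then $\nu_p(\mathfrak{f}(\chi)) = 0$, so the hypothesis $\nu_p(N) \in \{1, \nu_p(\mathfrak{f}(\chi))\}$ combined with $p \mid N$ forces $\nu_p(N) = 1$. The discussion following equation (\ref{apis0}), via \cite[Theorem 3]{Li}, has already noted that this case forces $|a_p| = \sqrt{p}$, contradicting the fact that $a_p$ is a Satake parameter and therefore a root of unity of absolute value $1$. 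Thus $\snew{1}{N}{\chi} = 0$.

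For the second statement, when $\chi_p \neq \mathbbm{1}$ the hypothesis $\nu_p(N) \in \{1, \nu_p(\mathfrak{f}(\chi))\}$ collapses to $\nu_p(N) = \nu_p(\mathfrak{f}(\chi))$. The key input is the local classification of admissible $2$-dimensional complex representations of the Weil group at $p$: the Artin conductor of $\rho_p$ equals that of $\det \rho_p = \chi_p$ if and only if $\rho_p = \psi_1 \oplus \psi_2$ with $\psi_1$ unramified. (In a principal series with both characters ramified, or in any supercuspidal representation, the conductor of $\rho_p$ strictly exceeds that of its determinant.) Hence $\psi_2|_{I_p} = \chi_p|_{I_p}$ has the same order as $\chi_p$, and $\rho(I_p)$ is the cyclic subgroup $\{\mathrm{diag}(1, \chi_p(\sigma)) : \sigma \in I_p\}$ of $\gl2(\mathbb{C})$. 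Since no such matrix other than the identity is scalar, its image $\bar{\rho}(I_p)$ inside the projective image $G_0$ is also cyclic of order $\Ord(\chi_p)$. But the largest cyclic subgroups of $A_4$, $S_4$, $A_5$ have orders $3$, $4$, $5$ respectively, so $\Ord(\chi_p) > 5$ forces $G_0$ to be dihedral or cyclic, contradicting $f \in \sexo{N}{\chi}$.

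For the third statement, suppose both $p$ and $q$ satisfy the hypothesis and, for contradiction, $f \in \sexo{N}{\chi}$. By the second part $\Ord(\chi_p), \Ord(\chi_q) \in \{1,2,3,4,5\}$, and since $p \neq q$ the Chinese Remainder Theorem gives $\Ord(\chi_p \chi_q) = \lcm(\Ord(\chi_p), \Ord(\chi_q))$. For this to equal $20$ we must have $\{\Ord(\chi_p), \Ord(\chi_q)\} = \{4, 5\}$, so $G_0$ would simultaneously contain cyclic subgroups of orders $4$ and $5$. However, $A_5$ has no element of order $4$, and neither $A_4$ nor $S_4$ has an element of order $5$, yielding the contradiction.

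The main obstacle is the local-representation classification invoked in the second part, namely the equivalence ``$c(\rho_p) = c(\det \rho_p)$ if and only if $\rho_p$ is principal series with one unramified summand.'' A self-contained proof would require running through the conductor exponent formulas for each supercuspidal type (including the ramified and unramified dihedral inductions, and the $p = 2$ exceptional types of $S_4$-image), but the equivalence is a standard consequence of the local Langlands correspondence for $\gl2$ and may be cited directly. Everything else reduces to elementary subgroup analysis of $A_4$, $S_4$, and $A_5$.
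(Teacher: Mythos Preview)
Your proof is correct. For the first claim you argue exactly as the paper does, pointing back to the remarks after (\ref{apis0}). For the two claims about $\sexo{N}{\chi}$ the paper simply cites \cite{BookerLeeStrombergsson}, whereas you supply the underlying argument directly: from $\nu_p(N)=\nu_p(\mathfrak{f}(\chi))$ you force $\rho_p$ to split off an unramified character, so that $\overline{\rho}(I_p)$ is cyclic of order $\Ord(\chi_p)$, and then you finish with the elementary bound on cyclic subgroups of $A_4$, $S_4$, $A_5$. This is precisely the mechanism behind the cited result, so your approach is not genuinely different so much as self-contained. The one step you flag --- that $a(\rho_p)=a(\det\rho_p)$ forces $\rho_p$ to be a sum of characters with one summand unramified --- is indeed the only nontrivial input; it follows quickly from the conductor formula once one notes that for an irreducible (hence inertia-fixed-point-free) $\rho_p$ the $i=0$ term already gives $a(\rho_p)\ge a(\det\rho_p)+1$, and similarly for a sum of two ramified characters.
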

\begin{proof}
The results on $\mathcal{S}_1^{\text{exotic}}$ are given in \cite{BookerLeeStrombergsson}. The additional result on all of $\mathcal{S}^{\text{new}}$ comes from the remarks after (\ref{apis0}).
\end{proof}
In general, for a level $N$ and character $\chi$, we only need to compute $\snew{1}{N}{\chi}$ if it is possible that $\mathcal{S}_1^{\text{exotic}}(N,\chi)$ is non-zero, or if there exists some multiple $M$ of $N$ which we wish to compute where it is possible that $\mathcal{S}_1^{\text{exotic}}(M,\chi)$ is non-zero, such that we will require lifts of the elements of $\mathcal{S}_1^{\text{dihedral}}(N,\chi)$ as oldforms. This requirement drives Definition \ref{admissible} and Lemma \ref{admissiblelemma} follows trivially.

\bibliography{papers}
\bibliographystyle{plain}

\end{document}